\newtheorem{theorem}{Theorem}
\newtheorem{col}{Corollary}
\newtheorem{lemma}{Lemma}
\newtheorem{prop}{Proposition}
\begin{document}

\begin{frontmatter}

\title{Strong Consistency of Factorial $K$-means Clustering}
\runtitle{Strong Consistency of Factorial $K$-means Clustering}


\author{\fnms{Yoshikazu} \snm{Terada}\corref{}\ead[label=e1]{terada@sigmath.es.osaka-u.ac.jp}}
\address{Graduate School of Engineering Science, Osaka University, 1-3 Machikaneyama,\\ Toyonaka, Osaka, Japan\\ \printead{e1}}
\affiliation{Osaka University}

\runauthor{Y. Terada}

\begin{abstract}
Factorial $k$-means (FKM) clustering is a method for clustering objects in 
a low-dimensional subspace.
The advantage of this method is that
the partition of objects and the low-dimensional subspace reflecting the cluster structure are obtained, 
simultaneously.
In some cases that the reduced $k$-means clustering (RKM) does not work well, 
FKM clustering can discover the cluster structure underlying a lower dimensional subspace.
Conditions that ensure the almost sure convergence of 
the estimator of FKM clustering as the sample size increases unboundedly are derived.
The result is proved for a more general model including FKM clustering.
\end{abstract}


\begin{keyword}
\kwd{subspace clustering}
\kwd{$k$-means}
\end{keyword}

\end{frontmatter}

\section{Introduction}
\label{intro}

If we apply a cluster analysis to data, 
it is highly unlikely that 
all variables relate to the same cluster structure.
Hence, it is sometimes beneficial to regard the true cluster structure of interest 
as lying in a low-dimensional subspace of the data.
In these cases, 
researchers often apply the following two-step procedure:
\begin{list}{\textbullet}{\topsep=1pt}
\setlength{\parskip}{0cm}
\setlength{\itemsep}{0cm}
\item[\bf Step $1$.]
Carry out principal component analysis (PCA) and obtain the first few components. 
\item[\bf Step $2$.]
Perform the usual $k$-means clustering for the principal scores on the first few principal components, 
which are obtained in Step $1$.
\end{list}
This procedure is called ``tandem clustering" by Arabie and Hubert (1994).
Several authors warn against the use of tandem clustering 
(e.g., Arabie and Hubert (1994); Chang (1994); De Soete and Carroll (1994)).
The first few principle components of PCA do not necessarily reflect the cluster structure in data.
Thus, 
an appropriate clustering result  might not be obtained using this procedure.

Instead of a two-step procedure, such as tandem clustering, 
some methods that perform cluster analysis and dimension reduction simultaneously 
have been proposed (e.g., De Soete and Carroll (1994); Vichi and Kiers (2001)).
De Soete and Carroll (1994) proposed reduced $k$-means (RKM) clustering, 
 which includes conventional $k$-means clustering as a special case.
For given data points $\bm{x}_1,\;\dots,\;\bm{x}_n$ in $\mathbb{R}^{p}$, 
the fixed cluster number $k$ and the dimension number of subspace $q\;(q<\min\{k-1,\;p\})$,
the objective function of RKM clustering is defined by
$$
RKM_n(F,\;A) := \frac{1}{n}\sum_{i=1}^{n}\min_{1\le j \le k}\|\bm{x}_i-A\bm{f}_j\|^2,
$$
where $\bm{f}_j\in \mathbb{R}$, $F=\{\bm{f}_1,\;\dots,\;\bm{f}_{k}\}\subset \mathbb{R}^{q}$, 
$A$ is a $p\times q$ column-wise orthonormal matrix, 
and $\|\cdot\|$ represents the usual norm.
Under certain regularity conditions, 
RKM clustering has strong consistency (Terada (2012)). 
However, 
when the data matrix $X=\left( x_{ij}\right)_{n\times p}$ has a full rank, i.e., $\mathrm{rank}(X)=p$, 
RKM clustering may fail to find a subspace that reflects the cluster structure.
Indeed, RKM clustering has been applied to data composed of a total of $12$ independent variables (Figure $\ref{data}$), 
which consists of $2$ variables actually related to the cluster structure and $10$ noise variables.
\begin{figure}
\begin{minipage}{0.47\textwidth}
\begin{center}
\includegraphics[scale=0.3]{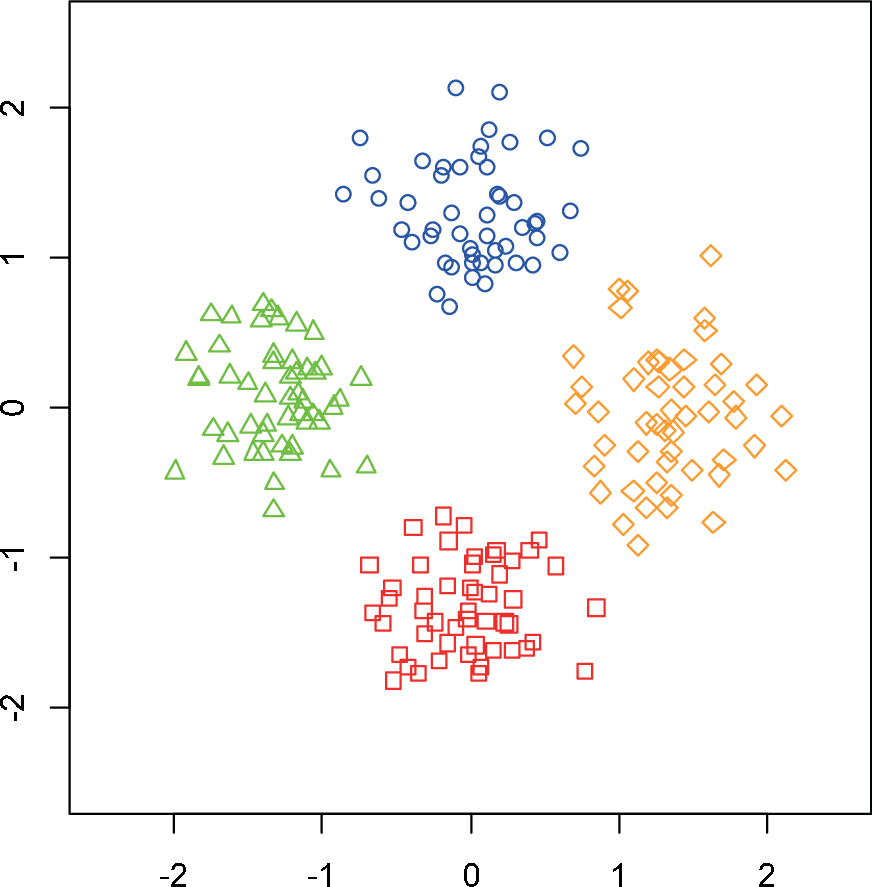}
\par{(a)}
\end{center}
\end{minipage}
\hfill
\begin{minipage}{0.47\textwidth}
\begin{center}
\includegraphics[scale=0.3]{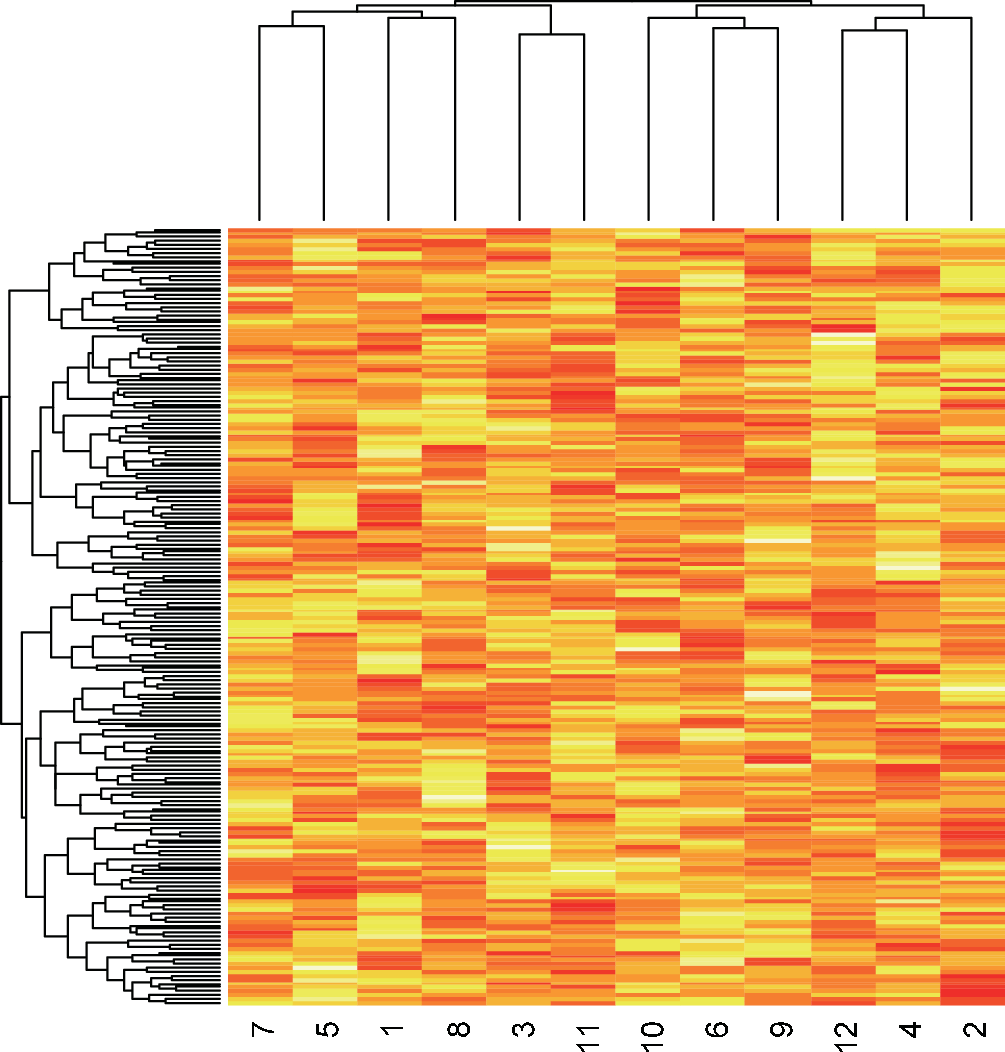}
\par{(b)}
\end{center}
\end{minipage}
\caption{Artificial data used to evaluate RKM clustering: (a) plot of two variables related to a cluster structure and (b) heat map of $12$ variables.}
\label{data}
\end{figure}
The result of RKM clustering for the data shown in Figure $\ref{data}$ is given in Figure $\ref{RKM}$.
\begin{figure}
\centerline{\includegraphics[scale=0.35]{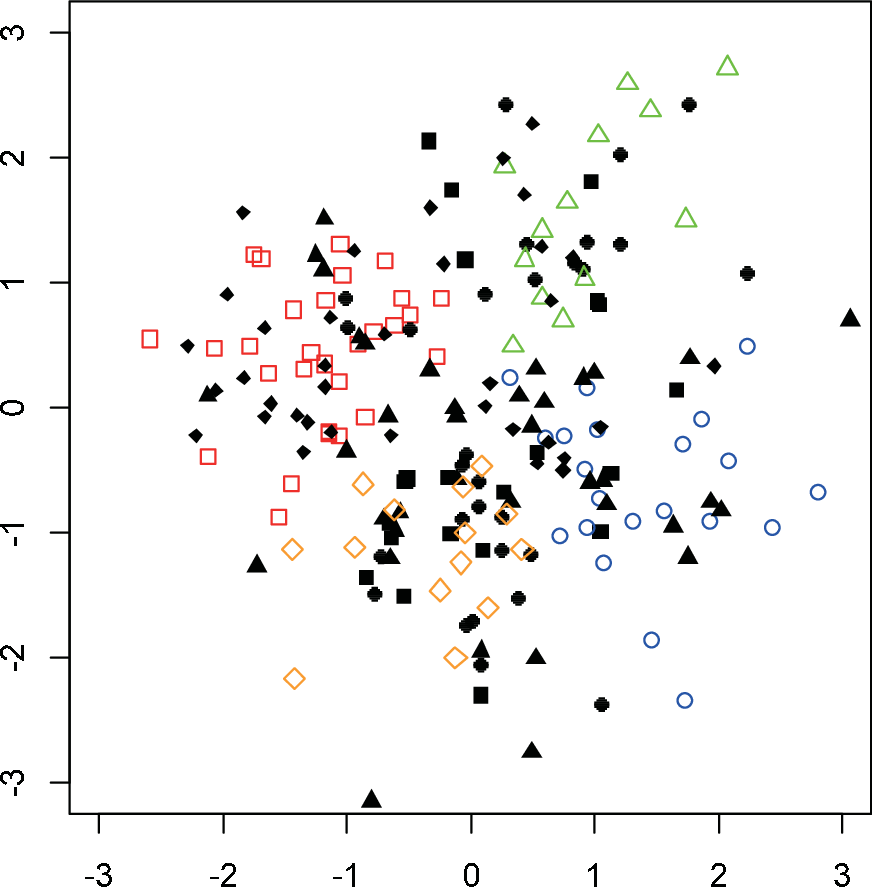}}
\caption{Plot of the result of RKM clustering for the artificial data given in Figure $\ref{data}$, 
where the black points represent misclassified objects.}
\label{RKM}
\end{figure}
The results indicate that the low-dimensional subspace revealed does not reflect the actual cluster structure 
and that the clustering result is, in fact, incorrect.

Vichi and Kiers (2001) pointed out the possibility of such problems with the RKM clustering method and 
proposed a new clustering method, 
called factorial $k$-means (FKM) clustering.
For the given data points $\bm{x}_1,\;\dots,\;\bm{x}_n$ in $\mathbb{R}^{p}$,
the number of clusters $k$, and the number of dimensions of subspace $q$, 
FKM clustering is defined by the minimization of the following loss function:
$$
FKM_n(F,\;A\mid k,\;q):=\frac{1}{n}\sum_{i=1}^{n}\min_{1\le j \le k}\|A^{T}\bm{x}_i - \bm{f}_j\|^2, 
$$
where $F:=\{\bm{f}_1,\;\dots,\;\bm{f}_{k}\},\;\bm{f}_j \in \mathbb{R}^{q}$ and $A$ is a $p\times q$ column-wise orthonormal matrix. 
When the given data points $\bm{x}_1,\;\dots,\;\bm{x}_n$ are independently drawn from a population distribution $P$,
we can rewrite the FKM objective function as 
$$
FKM(F,\;A,\;P_n):= \int\min_{\bm{f}\in F}\|A^{T}\bm{x}-\bm{f}\|^2P_n(d\bm{x}),
$$
where $P_n$ is the empirical measure of the data points $\bm{x}_1,\;\dots,\;\bm{x}_n$ in $\mathbb{R}^{p}$.
For each set of cluster centers $F$ and each $p\times q$ orthonormal matrix $A$, 
we obtain 
$$
\lim_{n\rightarrow \infty}FKM(F,\;A,\;P_n)= FKM(F,\;A,\;P):=\int\min_{\bm{f}\in F}\|A^T\bm{x}-\bm{f}\|P(d\bm{x})\quad\mathrm{a.s.}
$$
by the strong law of large numbers (SLLN).
Thus, besides $k$-means clustering and RKM clustering,
the global minimizer of $FKM(\cdot,\;\cdot,\;P_n)$ is also expected to converge almost surely to the global ones of $FKM(\cdot,\;\cdot,\;P)$, say the population global minimizers.

In this paper, 
we derive sufficient conditions for the existence of population global minimizers 
and then prove the strong consistency of FKM clustering under some regular conditions.
The framework of the proof in this paper is based on 
ones of the proof of the strong consistency of $k$-means clustering (Pollard (1981, 1982)) 
and RKM clustering (Terada (2012)).
In Pollard (1981), the proof of strong consistency of $k$-means clustering takes an inductive form.
On the other hand, the proof of  strong consistency of FKM clustering does not take such form and 
prove the consistency of FKM under the milde condition, as with Terada (2012).
In the proof of main theorem, 
first we also show that the optimal sample centres eventually lie in some compact regions on $\mathbb{R}^p$ as with Pollard (1981) and Terada (2012) and then prove the conclusion of the theorem in the same manner of the last part of 
the proof of the consistency theorem in Terada (2012).
For an arbitrary $p\times q$ column-wise orthonormal matrix $A\;(A^TA=I_q,\; q<p)$, 
an arbitrary $p$-dimensional point $\bm{x}\in \mathbb{R}^p$ and an arbitrary $q$-dimensional point $\bm{y}\in \mathbb{R}^q$,
the key inequality in this paper is that $\|A^T\bm{x}\| \le \|\bm{x}\|$
while the key equation in the strong consistency of RKM clustering (Terada (2012)) is that  $\|A\bm{y}\| = \|\bm{y}\|$.


The rest of the paper is organized as follows.
In Section $\ref{section:2}$, 
we describe the clustering algorithm of FKM to get the local minimum and the relationship between RKM clustering and FKM clustering.
We introduce prerequisites and notation in Section $\ref{section:3}$.
In Section $\ref{section:4}$, 
we prove the uniform SLLN and the continuity of the objective function of FKM clustering.
The sufficient condition for the existence of the population global minimizers and 
the strong consistency theorem of FKM clustering are stated in Section $\ref{section:5}$. 
In Section $\ref{section:6}$, 
we provide the main proof of the theorem.

%
\section{Factorial $K$-means clustering}\label{section:2}%
%
 
We will denote the number of objects and that of variables by $n$ and $p$.
Let $X=(x_{ij})_{n\times p}$ be a data matrix and $\bm{x}_{i}\;(i=1,\;\dots,\;n)$ be row vectors of $X$.
For given number of cluster $k$ and given number of dimensions of subspace $q$,
the objective function of FKM clustering is defined by 
$$
FKM_n(A,\;F,\;U\mid k,\;q) := \|XA-UF\|_{F}^2=\sum_{i=1}^{n}\min_{1\le j\le k}\|A^{T}\bm{x}_i-\bm{f}_j\|^2,
$$
where $\|\cdot\|_F$ denotes the Frobenius norm, 
$U=(u_{ij})_{n\times k}$ is a binary membership matrix, $A$ is a $p\times q$ column-wise orthonormal loading matrix, 
$F=(f_{ij})_{k\times q}$ is a centroid matrix, and $\bm{f}_j\;(j=1,\;\dots,\;k)$ are row vectors of $F$ representing
the $j$th cluster center.
$FKM_n$ can be minimized by the following alternating least-squares algorithm:
\begin{description}
\item[Step $0$.] First, initial values are chosen for $A,\;F,$ and $U$.
\item[Step $1$.] 
For each $i=1,\;\dots,\;n$ and each $j=1,\;\dots,\;k$, 
we update $u_{ij}$ by
\begin{align*}
u_{ij} = 
\begin{cases}
1 & \text{iff $\|A^T\bm{x}_i-\bm{f}_j\|^2 < \|A^T\bm{x}_i-\bm{f}_{j^{\prime}}\|^2$ for each $j^{\prime}\neq j$},\\
0 & \text{otherwise}.
\end{cases}
\end{align*}
\item[Step $2$.] 
$A$ is updated by the first $q$ eigenvectors of $X^{T}\left[U(U^{T}U)^{-1}U^{T}-I_n\right]X$, where $I_n$ is the $n$-dimensional identity matrix.
\item[Step $3$.] $F$ is updated using $(U^{T}U)^{-1}U^{T}XA$.
\item[Step $4$.] 
Finally, the value of the function $FKM_n$ for the present values of $A,\;F$, and $U$ is computed.
If the function value has decreased, 
the values of $A,\;F$, and $U$ are updated in accordance with Steps $1$-$3$.
Otherwise, the algorithm has converged.
\end{description}
This algorithm monotonically decreases the FKM objective function and 
the solution of this algorithm will be at least a local minimum point.
Thus, it is better to use many random starts to obtain the global minimum points.

Let $\hat{A},\;\hat{F}$, and $\hat{U}$ denote the optimal parameters of FKM clustering.
We can visualize the low-dimensional subspace that reflects the cluster structure by $X\hat{A}$.  
Figure $\ref{FKM}$ represents such  a visualization of the optimal subspace that results from FKM clustering for the artificial data given in Figure $\ref{data}$.
\begin{figure}
\centerline{\includegraphics[scale=0.35]{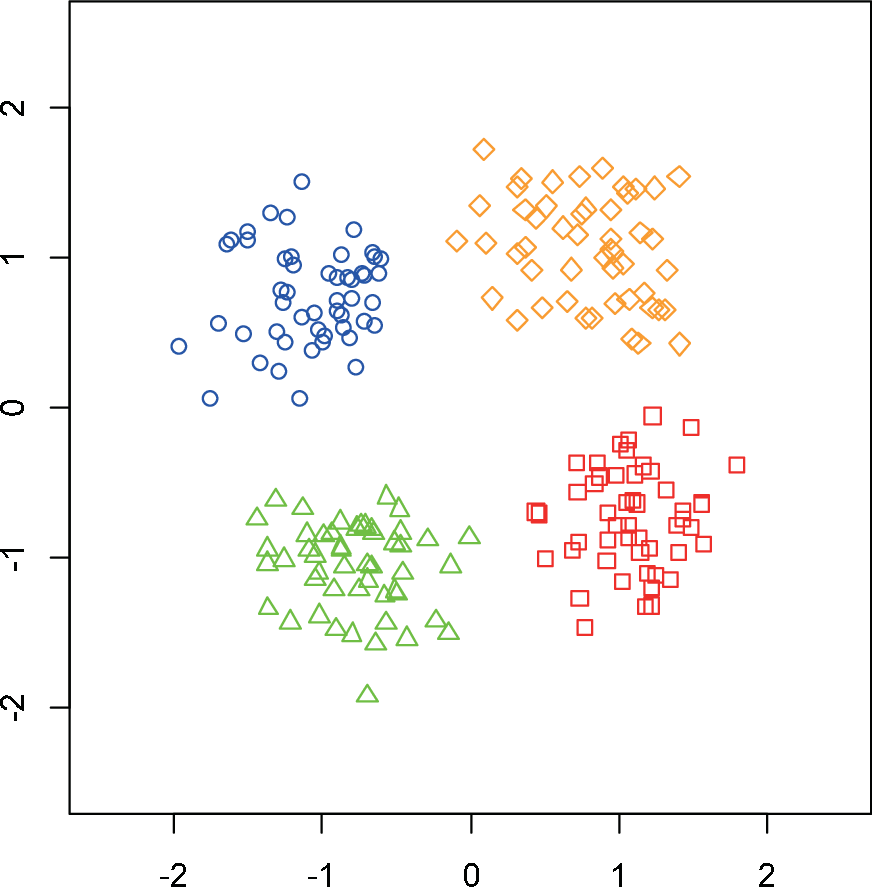}}
\caption{Plot of the result of FKM clustering for the artificial data given in Figure $\ref{data}$.}
\label{FKM}
\end{figure}

Next, we briefly discuss the relationship between the RKM clustering and FKM clustering.
The objective function of RKM clustering is defined by 
\begin{align*}
RKM_{n}(A,\;F,\;U):= \|X-UFA^T\|_{F}^2=\sum_{i=1}^{n}\min_{1\le j\le k}\|\bm{x}_i-A\bm{f}_j\|^2.
\end{align*}
This objective function can be decomposed into two terms:
\begin{align}\label{eq:2.2.1}
RKM_{n}(A,\;F,\;U)= \|X-XAA^T\|_F+\|XA-UF\|_{F}^2.
\end{align}
The first term of equation $(\ref{eq:2.2.1})$ is the objective function of the PCA procedure, 
and the second term is that of FKM clustering.
Thus, 
FKM clustering reveals the low-dimensional subspace reflecting the cluster structure more clearly
than the subspace of RKM clustering in some cases.
For more details about the relationship between RKM and FKM clustering, 
see Timmerman et al. (2010).

%
\section{Preliminaries}\label{section:3}%
%
In this paper, 
the similar notations as ones used in Pollard (1981) and Terada (2012) are used.
Let $(\Omega,\;\mathcal{F},\;P)$ be a probability space,
and $\bm{X}_1,\;\dots,\;\bm{X}_n$ be i.i.d. $p$-dimensional random variables drawn from a distribution $P$.   
Let $P_n$ denote the empirical measure based on $\bm{X}_1,\;\dots,\;\bm{X}_n$.
The set of all $p\times q$ column-wise orthonormal matrices will be denoted by $\mathcal{O}(p\times q)$.
$B_q(r)$ denotes the $q$-dimensional closed ball of radius $r$ centered at the origin.
We will define $\mathcal{R}_{k}:=\{R\subset \mathbb{R}^{q}\mid \#(R)\le k\}$, 
where $\#(E)$ is the cardinality of $E$.
We will denote  the parameter space by $\Xi_k:=\mathcal{R}_k\times \mathcal{O}(p\times q)$.
For each $M>0$,
$\mathcal{R}_{k}^{\ast}(M):=\{E\subset \mathbb{R}^{q}\mid \#(E)\le k \text{ and }E \subset B_{q}(M)\}$
and 
$\Theta_k^{\ast}(M):=\mathcal{R}_k^{\ast}(M)\times \mathcal{O}(p\times q)$.
Let $\psi:\mathbb{R}\rightarrow \mathbb{R}$ denote a non-negative decreasing function.
For each subset $F\subset \mathbb{R}^{q}$ and each $A\in\mathcal{O}(p\times q)$, 
the FKM clustering loss function with a probability measure $Q$ on $\mathbb{R}^{p}$ 
is defined by 
$$
\Psi(F,\;A,\;Q):=\int \min_{\bm{f}\in F}\psi(\|A^T\bm{x}-\bm{f}\|)Q(d\bm{x}).
$$
Write 
$$
m_k(Q):=\inf_{(F,\;A)\in \Xi_k}\Psi(F,\;A,\;Q)
$$
and
$$
m_k^\ast(Q\mid M):=\inf_{(F,\;A)\in \Theta_k^\ast(M)}\Psi(F,\;A,\;Q).
$$
For $\theta = (F,\;A) \in \Xi_k$, we will use both descriptions $\Psi(\theta,\;Q)$ and $\Psi(F,\;A,\;Q)$.
The set of population global optimizers and that of sample global optimizers will be denoted 
by $\Theta^{\prime}:=\{\theta\in \Xi_k \mid m_k(P)=\Psi(\theta,\;P)\}$ 
and 
$\Theta_n^{\prime}:=\{\theta\in \Xi_k \mid m_k(P_n)=\Psi(\theta,\;P_n)\}$, respectively.
For each $M>0$, 
let 
$\Theta^\ast:=\{\theta \in \Theta_k^\ast(M) \mid m_k^\ast(P\mid M) = \Psi(\theta,\;P)\}$
and 
$\Theta_n^\ast:=\{\theta \in \Theta_k^\ast(M) \mid m_k^\ast(P_n\mid M) = \Psi(\theta,\;P_n)\}$.
When we emphasize that $\Theta^{\prime}$ and $\Theta_n^{\prime}$ are dependent on the index $k$, 
we write $\Theta^{\prime}(k)$ and $\Theta_n^{\prime}(k)$ instead of $\Theta^{\prime}$ and $\Theta_n^{\prime}$, respectively.
One of the measurable estimators in $\Theta_n^{\prime}$ will be denoted 
by $\hat{\theta}_n$ or $\hat{\theta}_{n}(k)$.
Similarity, 
let $\hat{\theta}_n^\ast$ (or $\hat{\theta}_{n}^\ast(k)$) denote one of the measurable estimators in $\Theta_n^{\ast}$.
Existence of measurable estimators is  guaranteed by the measurable selection theorem; 
see Section $6.7$ of Pfanzagl (1994) for a detailed explanation.

Let $d_{F}(\cdot,\;\cdot)$ be the distance between two matrices based on the Frobenius norm 
and $d_{H}(\cdot,\;\cdot)$ be the Hausdorff distance, 
which is defined for finite subsets $A,\;B\subset \mathbb{R}^{q}$ as
$$
d_H(A,\;B):= \max_{\bm{a}\in A}\left\{ \min_{\bm{b}\in B}\|\bm{a}-\bm{b}\| \right\}.
$$
We will denote a product distance with $d_F$ and $d_H$ by $d$ (e.g. $d:=\sqrt{d_F^2+d_H^2}$).
As was done by Terada (2012) 
the distance between $\hat{\theta}_n$ and $\Theta^{\prime}$ is defined as 
$$
d(\hat{\theta}_n,\;\Theta^{\prime}):=\inf\{d(\hat{\theta}_n,\;\theta)\mid \theta\in \Theta^{\prime}\}.
$$
Like in Pollard (1981), 
we assume that $\psi$ is continuous and $\psi(0)=0$.
In addition, for controlling the growth of $\psi$, 
we assume that there exists $\lambda>0$ such that $\psi(2r)\le \lambda\psi(r)$ for all $r>0$.
Note that
\begin{align*}
\int \psi(\|A^T\bm{x}-\bm{f}\|)P(d\bm{x}) 
&\le
\int \psi(\|A^T\bm{x}\|+\|\bm{f}\|)P(d\bm{x}) \\
&\le
\int \psi(\|\bm{x}\|+\|\bm{f}\|)P(d\bm{x}) \\
&\le
\int_{\|\bm{f}\|>\|\bm{x}\|} \psi(2\|\bm{f}\|)P(d\bm{x}) + \int_{\|\bm{f}\|\le \|\bm{x}\|} \psi(2\|\bm{x}\|)P(d\bm{x}) \\
&\le
\psi(2\|\bm{f}\|) + \lambda \int \psi(\|\bm{x}\|)P(d\bm{x})
\end{align*}
for all $\bm{f}\in F$ and all $A\in \mathcal{O}(p\times q)$. 
Thus, $\Psi(F,\;A,\;P)$ is finite for each $F\in \mathcal{R}_k$ and $A\in \mathcal{O}(p\times q)$
as long as $\int \psi(\|\bm{x}\|)P(d\bm{x})<\infty$.

Let $R$ be a $q\times q$ orthonormal matrix, i.e., $R^{T}R=RR^{T}=I_q$.
For each $\bm{f}\in \mathbb{R}^q$ and each $A\in \mathcal{O}(p\times q)$, 
we have $AR^T\in \mathcal{O}(p\times q)$ and 
$$
\int\psi(\|A^{T}\bm{x}-\bm{f}\|)P(d\bm{x})=\int\psi(\|RA^{T}\bm{x}-R\bm{f}\|)P(d\bm{x}).
$$
Hence, 
$\Theta^{\prime}$ is not a singleton when $\Theta^{\prime}\neq \emptyset$; that is, 
FKM clustering has rotational indeterminacy, as well as RKM clustering.

%
\section{The uniform SLLN and the continuity of $\Psi(\cdot,\;\cdot,\;P)$}\label{section:4}
%
\begin{lemma}\label{Lemma:1}
Let $M$ be an arbitrary positive number.
Let $\mathcal{G}$ be the class of all $P$-integrable functions on $\mathbb{R}^p$ of the form
$
g_{(F,\;A)}(\bm{x}):=\min_{\bm{f}\in F}\psi(\|A^T\bm{x}-\bm{f}\|),
$
where $(F,\;A)$ takes all values over $\Theta_k^\ast(M)$.
Suppose that $\int\psi(\|\bm{x}\|)P(d\bm{x})<\infty$.
Then, 
\begin{align*}
\lim_{n\rightarrow \infty}\sup_{g\in \mathcal{G}}\left| \int g(\bm{x})P_n(d\bm{x}) -\int g(\bm{x})P(d\bm{x}) \right|=0\quad \mathrm{a.s.}
\end{align*}
\end{lemma}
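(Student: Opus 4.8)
The plan is to apply a standard compactness argument for uniform strong laws, in the spirit of Pollard (1981). The three ingredients I would assemble are (i) a $P$-integrable envelope for the class $\mathcal{G}$, (ii) compactness of the index set $\Theta_k^\ast(M)$, and (iii) continuity of the map $(F,\;A)\mapsto g_{(F,\;A)}(\bm{x})$ for each fixed $\bm{x}$. First I would produce the envelope: for any $(F,\;A)\in\Theta_k^\ast(M)$ and any $\bm{f}\in F\subset B_q(M)$, the inequality $\|A^T\bm{x}\|\le\|\bm{x}\|$ together with monotonicity of $\psi$ gives $g_{(F,\;A)}(\bm{x})\le\psi(\|A^T\bm{x}-\bm{f}\|)\le\psi(\|\bm{x}\|+M)$, and splitting on whether $\|\bm{x}\|\ge M$ and using $\psi(2r)\le\lambda\psi(r)$ yields $g_{(F,\;A)}(\bm{x})\le\lambda\psi(\|\bm{x}\|)+\psi(2M)=:G(\bm{x})$, by the same chain of inequalities used in Section~\ref{section:3}. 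Since $\int\psi(\|\bm{x}\|)P(d\bm{x})<\infty$, the envelope $G$ is $P$-integrable.

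Next I would note that $\Theta_k^\ast(M)=\mathcal{R}_k^\ast(M)\times\mathcal{O}(p\times q)$ is compact in the product metric $d$: the set $\mathcal{O}(p\times q)$ is a closed, bounded set of matrices, and $\mathcal{R}_k^\ast(M)$, the sets of at most $k$ points contained in the compact ball $B_q(M)$, is compact under the Hausdorff metric $d_H$. For fixed $\bm{x}$, continuity of $(F,\;A)\mapsto g_{(F,\;A)}(\bm{x})$ then follows from continuity of $\psi$, continuity of $A\mapsto A^T\bm{x}$, and the fact that a minimum of continuously varying values taken over point sets that converge in $d_H$ is continuous; the bound $\|A^T\bm{x}\|\le\|\bm{x}\|$ again keeps every argument of $\psi$ in a bounded range uniformly over the relevant neighbourhood.

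With these in hand I would run the usual covering argument. For $\rho>0$ and $\theta=(F,\;A)$, set $\Delta_\rho(\bm{x},\;\theta):=\sup\{|g_{\theta'}(\bm{x})-g_{\theta}(\bm{x})|:d(\theta',\;\theta)<\rho\}$; this is measurable because $\Theta_k^\ast(M)$ is separable and the functions are continuous, so the supremum may be taken over a countable dense subset, and it is dominated by $2G$. By pointwise continuity and dominated convergence, $\int\Delta_\rho(\bm{x},\;\theta)P(d\bm{x})\to0$ as $\rho\downarrow0$ for each $\theta$. Choosing a finite cover $\{\theta_1,\;\dots,\;\theta_N\}$ of the compact set $\Theta_k^\ast(M)$ by $\rho$-balls, every $g_\theta$ satisfies $|\int g_\theta\,d(P_n-P)|\le|\int g_{\theta_i}\,d(P_n-P)|+\int\Delta_\rho(\cdot,\;\theta_i)\,dP_n+\int\Delta_\rho(\cdot,\;\theta_i)\,dP$ for the nearest centre $\theta_i$. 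Applying the ordinary SLLN to the finitely many integrands $g_{\theta_i}$ and $\Delta_\rho(\cdot,\;\theta_i)$, and then letting $\rho\downarrow0$ along a sequence, gives the claimed almost-sure uniform convergence.

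The main obstacle I anticipate is step (iii): verifying that $(F,\;A)\mapsto g_{(F,\;A)}(\bm{x})$ is genuinely continuous when the orthonormal matrix $A$ and the finite centre set $F$ (measured in $d_H$) vary simultaneously, and then confirming that the oscillation $\Delta_\rho$ is integrable and vanishes in the limit. This is precisely where the inequality $\|A^T\bm{x}\|\le\|\bm{x}\|$ highlighted in the introduction does the work, since it lets the single envelope $G$ control the oscillation uniformly in $A$; the remaining bookkeeping is routine.
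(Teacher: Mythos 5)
Your proposal is correct in substance, but it takes a genuinely different route from the paper's. The paper proves Lemma \ref{Lemma:1} by verifying Dehardt's (1971) bracketing condition directly: it builds finite $\delta_1$- and $\delta_2$-nets for $B_q(M)$ and for the Frobenius sphere $S_{p\times q}(\sqrt{q})$ containing $\mathcal{O}(p\times q)$, takes as brackets the explicitly shifted functions $\min_{\bm{f}\in F_\ast}\psi(\|A_\ast^T\bm{x}-\bm{f}\|\pm(\delta_1+\delta_2\|\bm{x}\|))$, and bounds the bracket width by uniform continuity of $\psi$ on bounded sets plus the tail estimate $2k\lambda^{m}\int_{\|\bm{x}\|\ge R}\psi(\|\bm{x}\|)P(d\bm{x})$. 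You instead run the Wald-type compactness argument: integrable envelope $\lambda\psi(\|\bm{x}\|)+\psi(2M)$, pointwise continuity of $\theta\mapsto g_\theta(\bm{x})$, oscillation $\Delta_\rho$ dominated by twice the envelope, dominated convergence, finite subcover, and the ordinary SLLN for finitely many functions. The two are close cousins---your pairs $g_{\theta_i}\pm\Delta_\rho(\cdot,\;\theta_i)$ are exactly brackets, so your argument implicitly verifies the same Dehardt condition---but yours trades the paper's explicit estimates for an abstract dominated-convergence step, which is cleaner and avoids bookkeeping such as choosing $m$ with $\sqrt{q}+\delta_2\le 2^{m-1}$; the paper's explicit construction, in exchange, gives quantitative control of the nets (essentially a bracketing-number bound) that would be reusable for rate-of-convergence results.

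Two points in your write-up need tightening, though neither is fatal. First, the paper's $d_H$ as defined is the \emph{one-sided} (directed) Hausdorff deviation, under which $\theta\mapsto g_\theta(\bm{x})$ is not continuous: deleting a point of $F$ leaves the deviation zero but can strictly increase the minimum. For your compactness and continuity claims you must use the symmetric Hausdorff metric on $\mathcal{R}_k^\ast(M)$, which does make it compact and makes $F\mapsto\min_{\bm{f}\in F}\psi(\|A^T\bm{x}-\bm{f}\|)$ continuous. Second, in your covering step the centres $\theta_i$ change when $\rho$ changes, so ``letting $\rho\downarrow 0$ along a sequence'' does not by itself force $\max_i\int\Delta_\rho(\cdot,\;\theta_i)P(d\bm{x})$ to become small; repair this either by fixing $\epsilon$, choosing for each $\theta$ a radius $\rho_\theta$ with $\int\Delta_{\rho_\theta}(\cdot,\;\theta)P(d\bm{x})<\epsilon$ and extracting a finite subcover of the balls $B(\theta,\;\rho_\theta)$, or by noting that for fixed $\bm{x}$ the map $\theta\mapsto g_\theta(\bm{x})$ is uniformly continuous on the compact set $\Theta_k^\ast(M)$, so that $\sup_\theta\Delta_\rho(\bm{x},\;\theta)\rightarrow 0$ pointwise and dominated convergence gives smallness uniformly in the choice of centres. (Also note that $\psi$ must be read as nondecreasing---the paper's ``decreasing'' is a typo---which is what your envelope bound tacitly uses.)
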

\begin{proof}
Dehardt (1971) provided a sufficient condition for the uniform SLLN.
Thus, it is sufficient to prove that for all $\epsilon>0$, there exists a finite class of functions $\mathcal{G}_\epsilon$ such that,
for each $g\in \mathcal{G}$, there are $\dot{g}$ and $\bar{g}$ in $\mathcal{G}_{\epsilon}$ with $\dot{g}\le g \le \bar{g}$ and 
$
\int\bar{g}(\bm{x})P(d\bm{x}) -\int \dot{g}(\bm{x})P(\bm{x}) <\epsilon.
$

Choose an arbitrary $\epsilon >0$.
Let $S_{p\times q}(\sqrt{q}):=\{X\in \mathbb{R}^{p\times q}\mid \|X\|_F=\sqrt{q}\}$.
We will denote by $D_{\delta_1}$ the finite set on $\mathbb{R}^{q}$ satisfying the condition that, ,for all $\bm{f}\in B_q(M)$, there exists $\bm{g}\in D_{\delta_1}$ such that
$\|\bm{f}-\bm{g}\|<\delta_1$.
Similarly,
we will denote by $\mathcal{A}_{p\times q,\;\delta_2}$ the finite set on $S_{p\times q}(\sqrt{q})$ 
satisfying the condition that, for all $A\in S_{p\times q}(\sqrt{q})$,
there exists $B\in \mathcal{A}_{p\times q,\;\delta_2}$ such that $\|A-B\|_F<\delta_2$.
Let $\mathcal{R}_{k,\;\delta_1}:=\{F\in \mathcal{R}_{k}^{\ast}(M)\mid F \subset D_{\delta_1}\}$.
Take $\mathcal{G}_\epsilon$ as the finite class of functions of the form
$$
\min_{\bm{f}\in F_\ast}\psi(\|A_{\ast}^T\bm{x}-\bm{f}\|+\delta_1+\delta_2\|\bm{x}\|)
\quad \text{or}\quad 
\min_{\bm{f}\in F_\ast}\psi(\|A_{\ast}^T\bm{x}-\bm{f}\|-\delta_1-\delta_2\|\bm{x}\|),
$$
where $(F_\ast,\;A_\ast)$ takes all values over $\mathcal{R}_{k,\;\delta_1}\times \mathcal{A}_{p\times q,\;\delta_2}$
and 
$\psi(r)$ is defined as zero for all negative $r<0$.

For any $F=\{\bm{f}_1,\;\dots,\;\bm{f}_k\}\in \mathcal{R}_{k}^{\ast}(M)$,
there exists $F_{\ast}=\{\bm{f}_1^{\ast},\;\dots,\;\bm{f}_{k}^{\ast}\}\in \mathcal{R}_{k,\;\delta_1}$ with $\|\bm{f}_i-\bm{f}_i^{\ast}\|<\delta_1$
for each $i$.
In addition,  since $\mathcal{O}(p\times q)\subset \cup_{A_\ast\in \mathcal{A}_{p\times q,\;\delta_2}}\{A\mid \|A-A_\ast\|_F<\delta_2\}$, for any $A\in \mathcal{O}(p\times q)$, there exists $A_\ast \in \mathcal{A}_{p\times q,\;\delta_2}$ with $\|A-A_\ast\|_F<\delta_2$.
Corresponding to each $g_{(F,\;A)}\in \mathcal{G}$, 
choose
$$
\bar{g}_{(F,\;A)}(\bm{x}):=\min_{\bm{f}\in F_\ast}\psi(\|A_\ast^T\bm{x}-\bm{f}\|+\delta_1+\delta_2\|\bm{x}\|)
$$
and 
$$
\dot{g}_{(F,\;A)}(\bm{x}):=\min_{\bm{f}\in F_\ast}\psi(\|A_\ast^T\bm{x}-\bm{f}\|-\delta_1-\delta_2\|\bm{x}\|).
$$
Since $\psi$ is a monotone function and 
$$
\|A_\ast^{T}\bm{x}-\bm{f}_j^{\ast}\|-\delta_1-\delta_2\|\bm{x}\|
\le
\|A^{T}\bm{x}-\bm{f}_j\|
\le
\|A_\ast^{T}\bm{x}-\bm{f}_j^{\ast}\|+\delta_1+\delta_2\|\bm{x}\|
$$
for each $i$ and each $\bm{x}\in \mathbb{R}^{p}$,
we have 
$
\dot{g}_{(F,\;A)} \le g_{(F,\;A)} \le \bar{g}_{(F,\;A)}.
$

Choosing $R>0$ to be greater than $(M+\delta_1)/\sqrt{q}$ (or $(M + \delta_1)/(\sqrt{q} + \delta_2)$),
we obtain
\begin{align*}
&\int \left[\bar{g}_{(F,\;A)}(\bm{x})-\dot{g}_{(F,\;A)}(\bm{x})\right]P(d\bm{x})\\
\le
&\int \sum_{i =1}^{k}
\bigl[
\psi(\|A_{\ast}^{T}\bm{x}-\bm{f}_{i}^{\ast}\|+\delta_1+\delta_2\|\bm{x}\|)
-\psi(\|A_{\ast}^{T}\bm{x}-\bm{f}_{i}^{\ast}\|-\delta_1-\delta_2\|\bm{x}\|)
\bigr]P(d\bm{x})\\
\le
&k\sup_{\|\bm{x}\|\le R}\sup_{\bm{f}\in B_{q}(M)}\sup_{A\in S_{p\times q}(\sqrt{q})}
\bigl[
\psi(\|A^{T}\bm{x}-\bm{f}\|+\delta_1+\delta_2\|\bm{x}\|)\\
&\qquad\qquad
-\psi(\|A^{T}\bm{x}-\bm{f}\|-\delta_1-\delta_2\|\bm{x}\|)
\bigr]
+2k\lambda^{m}\int_{\|\bm{x}\|\ge R}\psi(\|\bm{x}\|)P(d\bm{x}),
\end{align*}
where $m\in \mathbb{N}$ is chosen to satisfy the requirement that $\sqrt{q}+\delta_2\le 2^{m-1}$.
The second term in the last bound of the inequality directly above can be less than
$\epsilon/2$ by choosing $R$ to be sufficiently large.
Note that $\psi$ is uniform continuous on a bounded set.
The first term can be less than $\epsilon/2$ by choosing $\delta_1,\;\delta_2>0$ to be sufficiently small.
Therefore, 
the sufficient condition of the uniform SLLN for $\mathcal{G}$ is satisfied, and the proof is complete.
\qed
\end{proof}

\begin{lemma}\label{Lemma:2}
Let $M$ be an arbitrary positive number.
Suppose that $\int\psi(\|\bm{x}\|)P(d\bm{x})<\infty$.
Then, $\Psi(\cdot,\;P)$ is continuous on $\Theta_k^\ast(M)$.
\end{lemma}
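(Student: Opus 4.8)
The plan is to prove sequential continuity, which suffices since $\Theta_k^\ast(M)$ is a metric space under $d$. Fix $\theta=(F,\,A)\in\Theta_k^\ast(M)$ and take an arbitrary sequence $\theta_n=(F_n,\,A_n)\in\Theta_k^\ast(M)$ with $d(\theta_n,\,\theta)\to 0$, i.e. $\|A_n-A\|_F\to 0$ and $d_H(F_n,\,F)\to 0$. Writing $g_{\theta}(\bm{x}):=\min_{\bm{f}\in F}\psi(\|A^T\bm{x}-\bm{f}\|)$, I would deduce $\Psi(\theta_n,\,P)\to\Psi(\theta,\,P)$ from the dominated convergence theorem, by establishing (i) a fixed $P$-integrable majorant for the whole family $\{g_{\theta_n}\}$ and (ii) pointwise convergence $g_{\theta_n}\to g_\theta$.

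For the domination I can reuse the estimate already recorded in Section~\ref{section:3}. Every center of any $\theta'=(F',\,A')\in\Theta_k^\ast(M)$ lies in $B_q(M)$, and $\|A'^T\bm{x}\|\le\|\bm{x}\|$ by the key inequality of the paper; choosing any $\bm{f}_0\in F'$ and using that $\psi$ is monotone together with the growth condition $\psi(2r)\le\lambda\psi(r)$ gives
\[
g_{\theta'}(\bm{x})\le\psi(\|A'^T\bm{x}-\bm{f}_0\|)\le\psi(\|\bm{x}\|+M)\le\psi(2M)+\lambda\,\psi(\|\bm{x}\|)=:G(\bm{x}),
\]
where $\int G\,dP=\psi(2M)+\lambda\int\psi(\|\bm{x}\|)P(d\bm{x})<\infty$ under the hypothesis. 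Since this bound is uniform over $\Theta_k^\ast(M)$, it dominates every $g_{\theta_n}$.

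For the pointwise limit, fix $\bm{x}$. The map $A\mapsto A^T\bm{x}$ is linear with $\|A_n^T\bm{x}-A^T\bm{x}\|\le\|A_n-A\|_F\,\|\bm{x}\|\to 0$, and $\psi$ is continuous, hence uniformly continuous on the relevant bounded region. It then remains to pass the limit through the minimum over the finite center sets: given the minimizing center $\bm{f}_n^\ast\in F_n$ of $g_{\theta_n}(\bm{x})$, the Hausdorff convergence supplies a center of $F$ within vanishing distance of $\bm{f}_n^\ast$, which bounds $g_\theta(\bm{x})$ above by $g_{\theta_n}(\bm{x})$ up to an error tending to $0$; matching the minimizer of $g_\theta(\bm{x})$ in $F$ to a nearby center of $F_n$ gives the reverse estimate. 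Combining the two one-sided bounds with the uniform continuity of $\psi$ yields $|g_{\theta_n}(\bm{x})-g_\theta(\bm{x})|\to 0$.

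The main obstacle is precisely this last step: controlling the minimum over finite center sets whose cardinalities may differ along the sequence. The matching of an optimal center of one set to a nearby center of the other must be performed in \emph{both} directions of the Hausdorff distance, since a single direction only delivers semicontinuity rather than genuine convergence; this two-sided control is what forces the centers (and their multiplicities) to line up in the limit. Once the uniform majorant $G$ and the pointwise convergence are in hand, the dominated convergence theorem gives $\lim_n\Psi(\theta_n,\,P)=\Psi(\theta,\,P)$, proving that $\Psi(\cdot,\,P)$ is continuous on $\Theta_k^\ast(M)$.
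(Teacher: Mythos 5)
Your proof is correct, but it takes a genuinely different route from the paper. You argue via sequential continuity and dominated convergence: the uniform integrable majorant $G(\bm{x})=\psi(2M)+\lambda\psi(\|\bm{x}\|)$ (the same estimate the paper records in Section \ref{section:3}), plus pointwise convergence $g_{\theta_n}(\bm{x})\to g_{\theta}(\bm{x})$ obtained by matching minimizing centers in both directions of the Hausdorff distance and using $\|A_n^T\bm{x}-A^T\bm{x}\|\le\|A_n-A\|_F\|\bm{x}\|$ together with the continuity of $\psi$. The paper instead proves the statement by a quantitative bracketing estimate that mirrors its proof of Lemma \ref{Lemma:1}: for $(F,A)$ and $(G,B)$ within $\delta_1$ (centers) and $\delta_2$ (loadings), it bounds $\Psi(F,A,P)-\Psi(G,B,P)$ by a term $k\sup_{\|\bm{x}\|\le R}[\psi(\|B^T\bm{x}-\bm{g}\|+\delta_1+\delta_2\|\bm{x}\|)-\psi(\|B^T\bm{x}-\bm{g}\|)]$ controlled by uniform continuity of $\psi$ on bounded sets, plus a tail term $2k\lambda^{m}\int_{\|\bm{x}\|\ge R}\psi(\|\bm{x}\|)P(d\bm{x})$ controlled by the growth condition, and then symmetrizes by swapping the roles of $(F,A)$ and $(G,B)$. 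What each approach buys: the paper's argument is explicitly quantitative and uniform in the pair $(G,B)$ (a modulus depending only on $k$, $M$, $\lambda$, and the tail of $\psi(\|\bm{x}\|)$), and it recycles the machinery already set up for the uniform SLLN; your DCT argument is softer and avoids the truncation radius $R$ and the $\lambda^{m}$ bookkeeping entirely, at the cost of yielding only pointwise continuity --- which, however, upgrades to uniform continuity for free since $\Theta_k^\ast(M)$ is compact, so nothing needed downstream is lost. One shared caveat: the paper's $d_H$ as written is the one-sided (directed) max--min distance, so your assertion that $d$ metrizes $\Theta_k^\ast(M)$ implicitly reads $d_H$ as the symmetric Hausdorff metric; this is the interpretation the paper itself uses, since its own proof also matches centers in both directions.
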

\begin{proof}
This lemma can be proven in a similar manner as the proof of Lemma $\ref{Lemma:1}$.
If $(F,\;A),\;(G,\;B)\in \Theta_k^\ast(M)$ is chosen to satisfy $d_{H}(F,\;G)<\delta_1$ and $\|A-B\|_F<\delta_2$, 
then for each $\bm{g}\in G$ there exists $\bm{f}(\bm{g})\in F$ such that $\|\bm{g}-\bm{f}(\bm{g})\|<\delta_1$.
Choosing $R$ to be larger than $M+\delta_1$,
we obtain 
\begin{align}\label{lemma2:eq1}
&\Psi(F,\;A,\;P)-\Psi(G,\;B,\;P)\nonumber\\
=
&\int\left[\min_{\bm{f}\in F}\psi(\|A^{T}\bm{x}-\bm{f}\|)-\min_{\bm{g}\in G}\psi(\|B^{T}\bm{x}-\bm{g}\|)\right]P(d\bm{x})\nonumber\\
\le
&\int\max_{\bm{g}\in G}
\left[\psi(\|A^{T}\bm{x}-\bm{f}(\bm{g})\|)-\psi(\|B^{T}\bm{x}-\bm{g}\|)\right]P(d\bm{x})\nonumber\\
\le
&\int\sum_{\bm{g}\in G}\left[ \psi(\|B^T\bm{x}-\bm{g}\|+\delta_1+\delta_2\|\bm{x}\|)-\psi(\|B^{T}\bm{x}-\bm{g}\|) \right]P(d\bm{x})\nonumber\\
\le
&k\sup_{\|\bm{x}\|\le R}\max_{\bm{g}\in G}\left[ \psi(\|B^T\bm{x}-\bm{g}\|+\delta_1+\delta_2\|\bm{x}\|)-\psi(\|B^{T}\bm{x}-\bm{g}\|) \right]\nonumber\\
&+2\sum_{\bm{g}\in G} \int_{\|\bm{x}\|\ge R}\psi(\|B^T\bm{x}-\bm{g}\|+\delta_1+\delta_2\|\bm{x}\|)P(d\bm{x})\nonumber\\
\le
&k\sup_{\|\bm{x}\|\le R}\max_{\bm{g}\in G}\left[ \psi(\|B^T\bm{x}-\bm{g}\|+\delta_1+\delta_2\|\bm{x}\|)-\psi(\|B^{T}\bm{x}-\bm{g}\|) \right]\nonumber\\
&+2k\lambda^{m}\int_{\|\bm{x}\|\ge R}\psi(\|\bm{x}\|)P(d\bm{x}),
\end{align}
where $m\in \mathbb{N}$ is chosen to satisfy the condition that $2+\delta_2\le 2^m$.
By choosing $R$ to be sufficiently large and $\delta_1,\;\delta_2>0$ to be sufficiently small, 
the last bound in the inequality $(\ref{lemma2:eq1})$ can be less than $\epsilon$.
Since for each $\bm{f}\in F$ there exists $\bm{g}(\bm{f})\in G$ such that $\|\bm{g}-\bm{g}(\bm{f})\|<\delta_1$, 
the other inequality needed for continuity is obtained by interchanging $(F,\;A)$ and $(G,\;B)$ in the inequality $(\ref{lemma2:eq1})$.
\qed
\end{proof}

%
\section{Consistency theorem}\label{section:5}%
%
\subsection{Existence of population global optimizers}

Our purpose is to prove that
$\lim_{n\rightarrow \infty}d(\hat{\theta}_n,\;\Theta^{\prime})=0\;\mathrm{a.s.}$ 
under some regularity conditions.
However, there is a possibility that $\Theta^{\prime}$ is empty.
Therefore, first, 
we provide sufficient conditions for the existence of population global optimizers.
\begin{prop}\label{Prop:1}
Suppose that $\int\psi(\|\bm{x}\|)P(d\bm{x})<\infty$ and that $m_j(P)>m_k(P)$ for $j=1,\;2,\;\dots,\;k-1$.
Then, $\Theta^{\prime}\neq \emptyset$.
Furthermore, 
there exists $M>0$ such that $F\subset B_{q}(5M)$ for all $(F,\;A)\in \Theta^{\prime}$.
\end{prop}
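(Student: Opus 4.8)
The plan is to first reduce the hypothesis to a single gap condition, then obtain existence by a compactness argument applied to a minimizing sequence, and finally fix the explicit radius $5M$ by a centre-removal argument powered by that gap.

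Since $\mathcal{R}_j \subset \mathcal{R}_{j+1}$, the sequence $j \mapsto m_j(P)$ is non-increasing, so the assumption ``$m_j(P) > m_k(P)$ for $j = 1,\dots,k-1$'' is equivalent to the single strict inequality $m_{k-1}(P) > m_k(P)$; set $\Delta := m_{k-1}(P) - m_k(P) > 0$. Placing one centre at the origin and using $\|A^T\bm{x}\| \le \|\bm{x}\|$ gives $m_k(P) \le \int \psi(\|\bm{x}\|)P(d\bm{x}) < \infty$. Moreover every element of $\Theta^{\prime}$ must have exactly $k$ distinct points, since a configuration with at most $k-1$ centres lies in $\Xi_{k-1}$ and so has loss at least $m_{k-1}(P) = m_k(P)+\Delta > m_k(P)$. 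I fix once and for all an $M>0$ so large that both $\lambda\int_{\|\bm{x}\|>M}\psi(\|\bm{x}\|)P(d\bm{x}) < \Delta$ and $\psi(M/2)\,P(\|\bm{x}\|\le M/2) > m_k(P)$ hold; the former is possible by integrability, the latter because $\psi(M/2)P(\|\bm{x}\|\le M/2) \uparrow \sup_r\psi(r) > m_k(P)$ (as for the canonical choice $\psi(r)=r^2$).

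For existence, take a minimizing sequence $(F^{(n)},A^{(n)})$ with $\#(F^{(n)})=k$ and $\Psi(F^{(n)},A^{(n)},P)\to m_k(P)$. As $\mathcal{O}(p\times q)$ is compact, pass to a subsequence with $A^{(n)}\to A^\ast$ and such that each centre $\bm{f}_i^{(n)}$ either converges to a limit $\bm{f}_i^\ast$ or satisfies $\|\bm{f}_i^{(n)}\|\to\infty$; let $F^\ast$ be the set of finite limits. If some centre diverges, or two centres share a limit, then $\#(F^\ast)\le k-1$. Because a diverging centre has $\psi(\|(A^{(n)})^T\bm{x}-\bm{f}_i^{(n)}\|)\to\sup_r\psi(r)$ for each $\bm{x}$, the integrand converges pointwise, $\min_{\bm{f}\in F^{(n)}}\psi(\|(A^{(n)})^T\bm{x}-\bm{f}\|)\to\min_{\bm{f}\in F^\ast}\psi(\|(A^\ast)^T\bm{x}-\bm{f}\|)$, and Fatou's lemma yields $m_k(P)=\lim_n\Psi(F^{(n)},A^{(n)},P)\ge\Psi(F^\ast,A^\ast,P)\ge m_{k-1}(P)$, contradicting $\Delta>0$. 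Hence all centres remain bounded and separated, $(F^\ast,A^\ast)\in\Theta_k^\ast(\rho)$ for some $\rho$, and the continuity of $\Psi(\cdot,P)$ from Lemma~\ref{Lemma:2} gives $\Psi(F^\ast,A^\ast,P)=m_k(P)$; thus $(F^\ast,A^\ast)\in\Theta^{\prime}$ and $\Theta^{\prime}\neq\emptyset$.

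For the radius, let $(F,A)\in\Theta^{\prime}$ be arbitrary, so $\#(F)=k$. If every centre had norm exceeding $M$, then for $\|\bm{x}\|\le M/2$ each centre would lie at distance greater than $M/2$ from $A^T\bm{x}$, forcing $\Psi(F,A,P)\ge\psi(M/2)P(\|\bm{x}\|\le M/2)>m_k(P)$, which is impossible; hence some $\bm{f}_c\in F$ has $\|\bm{f}_c\|\le M$. Now suppose some $\bm{f}_{j_0}\in F$ has $\|\bm{f}_{j_0}\|>5M$. For $\|\bm{x}\|\le M$ one has $\|A^T\bm{x}-\bm{f}_c\|\le 2M<4M<\|A^T\bm{x}-\bm{f}_{j_0}\|$, so $\bm{f}_{j_0}$ is never the nearest centre there; consequently the region $S$ on which $\bm{f}_{j_0}$ is nearest is contained in $\{\|\bm{x}\|>M\}$. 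Deleting $\bm{f}_{j_0}$ alters the loss only on $S$, where the new nearest centre is at distance at most $\|A^T\bm{x}-\bm{f}_c\|\le\|\bm{x}\|+M\le 2\|\bm{x}\|$, so the total increase is at most $\int_S\psi(2\|\bm{x}\|)P(d\bm{x})\le\lambda\int_{\|\bm{x}\|>M}\psi(\|\bm{x}\|)P(d\bm{x})<\Delta$. But the resulting configuration uses at most $k-1$ centres, so its loss is at least $m_{k-1}(P)=m_k(P)+\Delta$, whereas it equals $m_k(P)$ plus an increase strictly below $\Delta$, a contradiction. Therefore $F\subset B_q(5M)$ for every $(F,A)\in\Theta^{\prime}$.

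I expect the delicate step to be the exclusion of escaping centres in the existence argument: one must justify the lower semicontinuity $\liminf_n\Psi(F^{(n)},A^{(n)},P)\ge\Psi(F^\ast,A^\ast,P)$ and that collapsing to fewer than $k$ limit centres strictly raises the value through the gap $\Delta$, which is also where the mild non-degeneracy $\sup_r\psi(r)>m_k(P)$ enters when $\psi$ is bounded. By comparison, the removal argument fixing the radius $5M$ is routine once $M$ has been chosen.
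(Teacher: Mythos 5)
Your proposal is correct, but its existence argument takes a genuinely different route from the paper's. The paper localizes first: Lemma \ref{Lemma:A1} shows that a near-optimal configuration must keep a centre in $B_q(M)$, Lemma \ref{Lemma:A2} upgrades this to show that any $F^{\prime}\not\subset B_q(5M)$ satisfies $\inf_{A}\Psi(F^{\prime},A,P)>\inf_{\theta\in\Theta_k^\ast(5M)}\Psi(\theta,P)$, so that $m_k(P)$ equals the infimum over the compact set $\mathcal{R}_k^\ast(5M)\times\mathcal{O}(p\times q)$, and existence then follows from sequential compactness plus the continuity of $\Psi(\cdot,P)$ (Lemma \ref{Lemma:2}). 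You instead take a minimizing sequence over all of $\Xi_k$, split the centres along a subsequence into convergent and escaping ones, and use pointwise convergence of the integrand together with Fatou's lemma to show that escape or coalescence would force $m_k(P)\ge\Psi(F^\ast,A^\ast,P)\ge m_{k-1}(P)$, contradicting the gap $\Delta>0$; existence follows with no prior compactness reduction (indeed Fatou plus the definition of the infimum already give $\Psi(F^\ast,A^\ast,P)=m_k(P)$, so Lemma \ref{Lemma:2} is not even needed there). Your radius step is essentially the paper's Lemma \ref{Lemma:A2} --- one centre in $B_q(M)$, a centre outside $B_q(5M)$ never nearest on $\{\|\bm{x}\|\le M\}$, deletion costs at most $\lambda\int_{\|\bm{x}\|>M}\psi(\|\bm{x}\|)P(d\bm{x})<\Delta$ while pushing the loss up to at least $m_{k-1}(P)$ --- except that you apply it a posteriori to an actual optimizer, whereas the paper proves it for arbitrary configurations, a form it then reuses in Lemma \ref{Lemma:4}. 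Your route is more self-contained and makes the role of the gap transparent; the paper's route yields lemmas that are recycled in the consistency proof.

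Two points should be tightened. First, your choice of $M$ needs $\sup_r\psi(r)>m_k(P)$, which you justify only for $\psi(r)=r^2$. This does hold in general under the proposition's hypotheses, but it deserves its two lines: if $s:=\sup_r\psi(r)=\infty$ it is trivial since $m_k(P)\le\int\psi(\|\bm{x}\|)P(d\bm{x})<\infty$; if $s<\infty$, note $s>0$ (otherwise $\psi\equiv 0$ and every $m_j(P)=0$, contradicting the gap), pick $\rho>0$ with $\psi(\rho)<s$ (possible since $\psi$ is continuous with $\psi(0)=0$), fix any $A$, and choose a ball of radius $\rho$ centred at some $\bm{f}_0\in\mathbb{R}^q$ carrying positive mass $c$ under the image of $P$ by $\bm{x}\mapsto A^T\bm{x}$; then $m_k(P)\le\int\psi(\|A^T\bm{x}-\bm{f}_0\|)P(d\bm{x})\le c\,\psi(\rho)+(1-c)s<s$. (In fact your condition is more robust than the paper's own inequality $(\ref{lemma3:eq1})$, which implicitly requires the stronger $\sup_r\psi(r)>\int\psi(\|\bm{x}\|)P(d\bm{x})$, automatic only for unbounded $\psi$.) Second, in the escape analysis you should cover the case $F^\ast=\emptyset$ (all centres diverge): there the integrand tends pointwise to $s$, and Fatou gives $m_k(P)\ge s\ge m_{k-1}(P)$, the last inequality because every value of $\Psi$ is at most $s$; the same contradiction results, but $\Psi(F^\ast,A^\ast,P)$ is not defined as written.
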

\begin{proof}
See Appendix \ref{app}.
\qed
\end{proof}

Under the assumption of Proposition $\ref{Prop:1}$, 
we can prove that $\Psi(\cdot,\;P)$ ensures the identification condition, which is a requirement of
the consistency theorem.
\begin{col}\label{Col:1}
Suppose that $\int\psi(\|\bm{x}\|)P(d\bm{x})<\infty$ and that $m_j(P)>m_k(P)$ for $j=1,\;2,\;\dots,\;k-1$.
Then, there exists $M_0>0$ such that for each $M>M_0$
$$
\inf_{\theta\in \Theta_{\epsilon}^\ast(M)}\Psi(\theta,\;P) > \inf_{\theta\in \Theta^{\prime}}\Psi(\theta,\;P)\quad \text{for all }\epsilon >0.
$$
where $\Theta_{\epsilon}^\ast(M):=\{\theta \in \Theta_k^\ast(M) \mid d(\theta,\;\Theta^{\prime})\ge \epsilon\}$.
\end{col}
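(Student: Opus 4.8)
The plan is to combine the compactness of the restricted parameter space $\Theta_k^\ast(M)$, the continuity of $\Psi(\cdot,\;P)$ from Lemma~\ref{Lemma:2}, and the nonemptiness and boundedness of $\Theta^{\prime}$ from Proposition~\ref{Prop:1}, and then extract the strict inequality by a short contradiction argument. First I would fix $M_0$: by Proposition~\ref{Prop:1} there is an $M^\ast>0$ with $F\subset B_q(5M^\ast)$ for every $(F,\;A)\in\Theta^{\prime}$, so setting $M_0:=5M^\ast$ guarantees $\Theta^{\prime}\subset\Theta_k^\ast(M)$ for every $M>M_0$. This containment is what makes the right-hand side meaningful: since every element of $\Theta^{\prime}$ attains the value $m_k(P)$ by definition, we have $\inf_{\theta\in\Theta^{\prime}}\Psi(\theta,\;P)=m_k(P)$, and $\Theta^{\prime}\subset\Theta_k^\ast(M)$ ensures that $m_k(P)$ is genuinely the optimal value inside the region out of which $\Theta_\epsilon^\ast(M)$ is carved.

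Next, fix $M>M_0$ and $\epsilon>0$. I would argue that $\Theta_k^\ast(M)=\mathcal{R}_k^\ast(M)\times\mathcal{O}(p\times q)$ is compact: the factor $\mathcal{O}(p\times q)$ is compact, and $\mathcal{R}_k^\ast(M)$, the collection of at-most-$k$-point subsets of the compact ball $B_q(M)$ equipped with $d_H$, is compact as well. Because $\theta\mapsto d(\theta,\;\Theta^{\prime})$ is continuous, the set $\Theta_\epsilon^\ast(M)=\{\theta\in\Theta_k^\ast(M)\mid d(\theta,\;\Theta^{\prime})\ge\epsilon\}$ is a closed subset of a compact set, hence itself compact. (If $\Theta_\epsilon^\ast(M)=\emptyset$ the claimed inequality holds trivially with the convention $\inf\emptyset=+\infty$, so I assume it is nonempty.) By Lemma~\ref{Lemma:2}, $\Psi(\cdot,\;P)$ is continuous on $\Theta_k^\ast(M)$, so it attains its infimum over the compact set $\Theta_\epsilon^\ast(M)$ at some point $\theta_\epsilon$.

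Finally I would close with the contradiction. Since $\Theta_\epsilon^\ast(M)\subset\Xi_k$, we always have $\Psi(\theta_\epsilon,\;P)\ge m_k(P)$. Suppose equality held. Then $\theta_\epsilon$ achieves the global value $m_k(P)$, so $\theta_\epsilon\in\Theta^{\prime}$ by the very definition of $\Theta^{\prime}$, which forces $d(\theta_\epsilon,\;\Theta^{\prime})=0$; this contradicts $\theta_\epsilon\in\Theta_\epsilon^\ast(M)$, where $d(\theta_\epsilon,\;\Theta^{\prime})\ge\epsilon>0$. Hence $\Psi(\theta_\epsilon,\;P)>m_k(P)=\inf_{\theta\in\Theta^{\prime}}\Psi(\theta,\;P)$, which is exactly the asserted inequality because $\theta_\epsilon$ realizes $\inf_{\theta\in\Theta_\epsilon^\ast(M)}\Psi(\theta,\;P)$.

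The step I expect to be the main obstacle is establishing attainment of the infimum, i.e. the compactness of $\Theta_k^\ast(M)$ together with the (lower semi)continuity of $\theta\mapsto d(\theta,\;\Theta^{\prime})$ in the product distance built from $d_F$ and the directed Hausdorff distance $d_H$. Verifying that the space of at-most-$k$-point subsets of $B_q(M)$ is sequentially compact under $d_H$, and that limits retain at most $k$ points, is the one place demanding genuine care; once attainment is secured, the concluding contradiction is immediate.
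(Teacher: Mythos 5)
Your proposal is correct and takes essentially the same approach as the paper: both arguments rest on the compactness of the restricted parameter space and the continuity of $\Psi(\cdot,\;P)$ from Lemma~\ref{Lemma:2}, and both conclude by showing that a point of $\Theta_k^\ast(M)$ at distance at least $\epsilon$ from $\Theta^{\prime}$ would otherwise have to lie in $\Theta^{\prime}$, a contradiction. The only cosmetic difference is that you obtain a minimizer by Weierstrass attainment (which additionally requires $\Theta_\epsilon^\ast(M)$ to be closed, via continuity of $\theta \mapsto d(\theta,\;\Theta^{\prime})$), whereas the paper runs a minimizing-sequence/convergent-subsequence contradiction that avoids that step; the compactness facts you flag as delicate are exactly the ones the paper also asserts without proof.
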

\begin{proof}
See Appendix \ref{app}.
\qed
\end{proof}

\subsection{Strong consistency of FKM clustering}

If the parameter space is restricted to $\Theta_k^\ast(M) \subset \Xi_k$, 
we easily obtain the strong consistency of FKM clustering.
Since $\Theta_k^\ast(M)$ is compact, 
we have $\Theta^\ast\neq \emptyset$ and the identification condition:
$$
\inf_{\theta \in \Theta_\epsilon^\ast(M)}\Psi(\theta,\;P) > 
\inf_{\theta \in \Theta^\ast}\Psi(\theta,\;P)
\quad \text{for all } \epsilon>0
$$
where $\Theta_\epsilon^\ast(M):=\{\theta \in \Theta_k^\ast(M) \mid d(\theta,\;\Theta^\ast) \ge \epsilon\}$.

\begin{prop}\label{Prop:2}
Let M be an arbitrary positive number. Suppose that $\int\psi(\|\bm{x}\|)P(d\bm{x})<\infty$.
Then, 
$$
\lim_{n\rightarrow \infty}d(\hat{\theta}_n^\ast,\;\Theta^{\ast})=0\;
\mathrm{a.s.}
,\;\text{and }
\lim_{n\rightarrow \infty} m_k^\ast(P_n\mid M)=m_k^\ast(P\mid M)\;
\mathrm{a.s.}
$$
\end{prop}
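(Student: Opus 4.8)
The plan is to run the standard Wald--Pollard consistency argument, whose four ingredients---compactness of $\Theta_k^\ast(M)$, the uniform SLLN, continuity of $\Psi(\cdot,\;P)$, and the strict identification inequality---are exactly the hypotheses already assembled just above this proposition (three of them coming directly from Lemma \ref{Lemma:1}, Lemma \ref{Lemma:2}, and the compactness remark, which also guarantees $\Theta^\ast\neq\emptyset$). Throughout I would fix the single event of probability one on which the conclusion of Lemma \ref{Lemma:1} holds and carry out a purely deterministic argument on it; this has the advantage that the conclusion then holds simultaneously for all $n$ and for an \emph{arbitrary} measurable selection $\hat\theta_n^\ast\in\Theta_n^\ast$.

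First I would restate Lemma \ref{Lemma:1} in the parametric form
$$
\sup_{\theta\in\Theta_k^\ast(M)}\bigl|\Psi(\theta,\;P_n)-\Psi(\theta,\;P)\bigr|\longrightarrow 0\quad\mathrm{a.s.},
$$
which is legitimate because $g_{(F,\;A)}(\bm{x})=\min_{\bm{f}\in F}\psi(\|A^T\bm{x}-\bm{f}\|)$ ranges over $\mathcal{G}$ precisely as $(F,\;A)$ ranges over $\Theta_k^\ast(M)$. The second assertion then drops out immediately from the fact that taking infima is non-expansive, i.e. $|\inf h_1-\inf h_2|\le\sup|h_1-h_2|$: applying this to $\Psi(\cdot,\;P_n)$ and $\Psi(\cdot,\;P)$ over $\Theta_k^\ast(M)$ gives
$$
\bigl|m_k^\ast(P_n\mid M)-m_k^\ast(P\mid M)\bigr|\le\sup_{\theta\in\Theta_k^\ast(M)}\bigl|\Psi(\theta,\;P_n)-\Psi(\theta,\;P)\bigr|\longrightarrow 0\quad\mathrm{a.s.}
$$

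For the first assertion I would argue by contradiction from the identification condition. Fixing $\epsilon>0$ and writing $\alpha:=\inf_{\theta\in\Theta_\epsilon^\ast(M)}\Psi(\theta,\;P)$, the identification condition gives $\alpha>m_k^\ast(P\mid M)$. Suppose, on the full-measure event above, that $\hat\theta_n^\ast\in\Theta_\epsilon^\ast(M)$ for infinitely many $n$. Along such $n$ we have $\Psi(\hat\theta_n^\ast,\;P)\ge\alpha$ by definition of $\alpha$; but since $\hat\theta_n^\ast$ minimizes $\Psi(\cdot,\;P_n)$ over $\Theta_k^\ast(M)$,
$$
\Psi(\hat\theta_n^\ast,\;P)\le\Psi(\hat\theta_n^\ast,\;P_n)+\sup_{\theta}\bigl|\Psi(\theta,\;P_n)-\Psi(\theta,\;P)\bigr|=m_k^\ast(P_n\mid M)+\sup_{\theta}\bigl|\Psi(\theta,\;P_n)-\Psi(\theta,\;P)\bigr|,
$$
whose right-hand side tends to $m_k^\ast(P\mid M)$ by the value convergence just proved together with the uniform SLLN. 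Passing to the limit along the subsequence yields $\alpha\le m_k^\ast(P\mid M)$, a contradiction. Hence for each fixed $\epsilon>0$ one has $d(\hat\theta_n^\ast,\;\Theta^\ast)<\epsilon$ for all large $n$, and since $\epsilon$ was arbitrary, $d(\hat\theta_n^\ast,\;\Theta^\ast)\to 0$ a.s.

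There is no serious obstacle here: the substance has been front-loaded into the uniform SLLN of Lemma \ref{Lemma:1} and into the strict identification inequality (which rests on compactness of $\Theta_k^\ast(M)$ and the continuity of $\Psi(\cdot,\;P)$ from Lemma \ref{Lemma:2}), so what remains is routine. The only points demanding any care are to execute the contradiction on one common probability-one event so that it applies to an arbitrary selection $\hat\theta_n^\ast$, and to observe that beyond $\int\psi(\|\bm{x}\|)P(d\bm{x})<\infty$ no further tail or compactness control is needed, precisely because restricting to $\Theta_k^\ast(M)$ already furnishes a compact domain. The genuine difficulty of the paper---removing the artificial restriction to $B_q(M)$ and controlling where the optimal sample centres can wander---is deferred to the unrestricted theorem and is exactly what Proposition \ref{Prop:1} and Corollary \ref{Col:1} are designed to handle.
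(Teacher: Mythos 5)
Your proposal is correct and follows essentially the same route as the paper: the paper's proof of Proposition \ref{Prop:2} is precisely the Wald--Pollard compactness argument assembled from Lemma \ref{Lemma:1} (uniform SLLN), Lemma \ref{Lemma:2} (continuity), compactness of $\Theta_k^\ast(M)$, and the identification condition stated just above the proposition, which is what you execute. The only (cosmetic) difference is bookkeeping: you obtain the value convergence $m_k^\ast(P_n\mid M)\to m_k^\ast(P\mid M)$ first via non-expansiveness of infima and then compare population values in the contradiction step, whereas the paper's referenced argument compares empirical values over $\Theta_\epsilon^\ast(M)$ and deduces the value convergence at the end.
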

\begin{proof}
From Lemma $\ref{Lemma:1}$ and Lemma $\ref{Lemma:2}$, we already obtain the uniform SLLN and the continuity of $\Psi(\cdot,\;P)$ on $\Theta_k^\ast(M)$.
Thus, the proof of this proposition is given by the similar argument of the last part of the proof of the consistency theorem.
\qed
\end{proof}
This fact is very important in the proof of Lemma $\ref{Lemma:4}$.
Using this fact, the proof of the main theorem does not necessary take  an inductive form with the number of cluster $k$ and we can prove the consistency under the mild condition.  

We cannot assume the uniqueness condition since FKM clustering has rotational indeterminacy.
In this study, as Terada (2012) did previously, 
we assume that $m_j(P)>m_k(P)$ for $j=1,\;\dots,\;k-1$.
This condition implies that an optimal set $F(k)$ of cluster centres has $k$ distinct elements.
When we do not use the fact in Proposition $\ref{Prop:2}$, 
we may need more strict condition $m_1(P)>m_2(P)>\dots>m_k(P)$ and 
the proof of the main theorem takes an inductive form with the number of cluster $k$ as with Pollard (1981).
The following theorem provides sufficient conditions for the strong consistency of FKM clustering.

\begin{theorem}\label{Theorem:1}
Suppose that $\int\psi(\|\bm{x}\|)P(d\bm{x})<\infty$ and that $m_j(P)>m_k(P)$ for $j=1,\;\dots,\;k-1$.
Then, $\Theta^{\prime}\neq \emptyset$,
$$
\lim_{n\rightarrow \infty}d(\hat{\theta}_n,\;\Theta^{\prime})=0\;
\mathrm{a.s.}
,\;\text{and }
\lim_{n\rightarrow \infty} m_k(P_n)=m_k(P)\;
\mathrm{a.s.}
$$
\end{theorem}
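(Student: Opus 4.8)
The plan is to deduce the theorem from the compact-parameter consistency of Proposition~\ref{Prop:2} by proving that the sample optimizers cannot drift to infinity. The orthonormal factor poses no difficulty, since $\mathcal{O}(p\times q)$ is compact and the bound $\|A^{T}\bm{x}\|\le\|\bm{x}\|$ controls the projected data uniformly in $A$; the only real issue is whether the centres $\hat{F}_n$ stay bounded. I would first fix the scale: Proposition~\ref{Prop:1} gives $\Theta^{\prime}\neq\emptyset$ together with an $M>0$ such that $F\subset B_q(5M)$ for every $(F,A)\in\Theta^{\prime}$, so with $M^{\prime}:=\max\{5M,M_0\}$ (using $M_0$ from Corollary~\ref{Col:1}) every population optimizer is feasible for the restricted problem, whence $\Theta^{\ast}=\Theta^{\prime}$ and $m_k^{\ast}(P\mid M^{\prime})=m_k(P)$.

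The easy half is the upper bound. Fixing any $\theta^{\ast}\in\Theta^{\prime}\subset\Theta_k^{\ast}(M^{\prime})$ and applying Lemma~\ref{Lemma:1} gives $m_k(P_n)\le\Psi(\theta^{\ast},P_n)\to\Psi(\theta^{\ast},P)=m_k(P)$, so $\limsup_n m_k(P_n)\le m_k(P)$ almost surely; taking a single centre at the origin and using $\|A^{T}\bm{x}\|\le\|\bm{x}\|$ gives the crude bound $m_k(P_n)\le m_1(P_n)\le\int\psi(\|\bm{x}\|)P_n(d\bm{x})\to\int\psi(\|\bm{x}\|)P(d\bm{x})<\infty$, so the sample loss is eventually bounded.

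The main obstacle is the boundedness step, and I would run it by contradiction in the style of Pollard~(1981) but with Proposition~\ref{Prop:2} replacing his induction on $k$. Suppose an optimal $\hat{\theta}_n=(\hat{F}_n,\hat{A}_n)$ carries a centre of norm exceeding $M^{\prime}$. The doubling condition $\psi(2r)\le\lambda\psi(r)$ forces $\psi(r)>0$ for every $r>0$, so a configuration with all centres of norm $>M^{\prime}/2$ would charge at least $\psi(M^{\prime}/4)\,P_n(\|\bm{x}\|\le M^{\prime}/4)$ to the bounded bulk of the data, exceeding the crude bound above once $M^{\prime}$ is large; hence some centre lies in $B_q(M^{\prime}/2)$. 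Every point with $\|\bm{x}\|\le M^{\prime}/4$ is then strictly nearer that centre than any centre of norm $>M^{\prime}$, so the far centres capture only points with $\|\bm{x}\|>M^{\prime}/4$. Deleting all far centres and reassigning their points to the nearest survivor, at distance at most $M^{\prime}/2+\|\bm{x}\|\le 3\|\bm{x}\|$, raises the loss by at most $\lambda^{2}\int_{\|\bm{x}\|>M^{\prime}/4}\psi(\|\bm{x}\|)P_n(d\bm{x})=:\eta_n$. The survivors number at most $k-1$ and lie in $B_q(M^{\prime})$, so $m_{k-1}^{\ast}(P_n\mid M^{\prime})\le m_k(P_n)+\eta_n$. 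Proposition~\ref{Prop:2} holds verbatim for cluster number $k-1$ (its proof uses only Lemmas~\ref{Lemma:1}--\ref{Lemma:2} and compactness, not the separation hypothesis), so letting $n\to\infty$ yields $m_{k-1}(P)\le m_{k-1}^{\ast}(P\mid M^{\prime})\le m_k(P)+\lambda^{2}\int_{\|\bm{x}\|>M^{\prime}/4}\psi(\|\bm{x}\|)P(d\bm{x})$. Enlarging $M^{\prime}$ so the tail integral falls below $(m_{k-1}(P)-m_k(P))/\lambda^{2}$ contradicts $m_{k-1}(P)>m_k(P)$; this is precisely where the gap hypothesis does the work. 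Hence, almost surely, for all large $n$ every centre of $\hat{\theta}_n$ lies in $B_q(M^{\prime})$.

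It remains to assemble the pieces. Once $\hat{\theta}_n\in\Theta_k^{\ast}(M^{\prime})$ and $\hat{\theta}_n$ attains the unrestricted infimum, it also attains the restricted one, so $\hat{\theta}_n$ is eventually a minimizer of $\Psi(\cdot,P_n)$ over the compact set $\Theta_k^{\ast}(M^{\prime})$ and the conclusion of Proposition~\ref{Prop:2} applies to it directly: $m_k(P_n)=m_k^{\ast}(P_n\mid M^{\prime})\to m_k^{\ast}(P\mid M^{\prime})=m_k(P)$ almost surely, which with the $\limsup$ bound gives $m_k(P_n)\to m_k(P)$, and $d(\hat{\theta}_n,\Theta^{\ast})\to 0$ together with $\Theta^{\ast}=\Theta^{\prime}$ gives $d(\hat{\theta}_n,\Theta^{\prime})\to 0$ almost surely. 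I would close by noting that all the almost-sure statements live on one probability-one event, formed by intersecting the countably many null sets produced by Lemma~\ref{Lemma:1} and by Proposition~\ref{Prop:2} at rational radii.
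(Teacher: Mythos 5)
Your proposal is correct and follows essentially the same route as the paper: your two-stage boundedness argument (first some centre in a fixed ball via the crude bound $m_k(P_n)\le\int\psi(\|\bm{x}\|)P_n(d\bm{x})$, then all centres bounded by deleting far centres, reassigning to a nearby survivor with the doubling condition, and invoking Proposition~\ref{Prop:2} at cluster number $k-1$ against the gap $m_{k-1}(P)>m_k(P)$) is exactly the content of Lemmas~\ref{Lemma:3} and \ref{Lemma:4}, and your endgame of identifying $\Theta^{\ast}=\Theta^{\prime}$ and citing Proposition~\ref{Prop:2} is the same compact-case argument the paper writes out in Section~\ref{section:6} (differing only in constants and in packaging that final step as a citation rather than repeating it).
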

\begin{proof}
See Section $\ref{section:5}$.
\end{proof}

Note that
if there exists a specific $A$ such that $\Psi(A,\;F,\;P)=0$ for all $F$; that is, 
the population distribution, $P$, is degenerate and the number of dimensions with the support of $P$ is given as $p-q$,
$m_j(P)>m_k(P)$ for $j=1,\;\dots,\;k-1$ is not satisfied.
%
\section{Proof of the theorem}\label{section:6}%
%

Since the theorem deals with almost sure convergence,
there might exist null subsets of $\Omega$ on which the strong consistency does not hold.
Therefore, throughout the proof, $\Omega_1$ denotes the set obtained by avoiding a possible null set from $\Omega$.

First, we prove that there exists $M>0$ such that,
for sufficiently large $n$, at least one center of the estimator $F_n\in \mathcal{R}_k$ is contained in $B_{q}(M)$.

\begin{lemma}\label{Lemma:3}
Suppose that $\int\psi(\|\bm{x}\|)P(d\bm{x})<\infty$.
Then,
there exists $M>0$ such that
\begin{align*}
P\left(\bigcup_{n=1}^{\infty}\bigcap_{m=n}^{\infty}\{\omega \mid \forall (F_m,\;A_m)\in \Theta_m^{\prime};\;F_m(\omega)\cap B_q(M)\neq \emptyset\}  \right)=1.
\end{align*}
\end{lemma}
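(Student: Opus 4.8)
The plan is to argue by contradiction, comparing an easy upper bound on the optimal sample value $m_k(P_n)$ against a lower bound that is forced to be large whenever an optimizer pushes all its centres out of $B_q(M)$. The whole argument rests on the paper's key inequality $\|A^T\bm{x}\|\le\|\bm{x}\|$, which makes the projection $\bm{x}\mapsto A^T\bm{x}$ non-expansive uniformly over $A\in\mathcal{O}(p\times q)$, so that a bound on $\|\bm{x}\|$ translates into a bound on $\|A^T\bm{x}\|$ irrespective of which loading matrix an optimizer selects.

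First I would record the upper bound. Using the single-centre configuration $(\{\bm{0}\},A_0)$ for an arbitrary fixed $A_0$ gives, for every $n$,
$$
m_k(P_n)\le\int\psi(\|A_0^T\bm{x}\|)P_n(d\bm{x})\le\int\psi(\|\bm{x}\|)P_n(d\bm{x}),
$$
and by the ordinary SLLN applied to the single $P$-integrable integrand $\psi(\|\cdot\|)$ the right-hand side tends almost surely to $\gamma:=\int\psi(\|\bm{x}\|)P(d\bm{x})$. Hence on a probability-one set $m_k(P_n)\le\gamma+1$ for all large $n$.

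Next I would produce the lower bound. Suppose $(F_n,A_n)\in\Theta_n'$ has $F_n\cap B_q(M)=\emptyset$, i.e. $\|\bm{f}\|>M$ for every $\bm{f}\in F_n$. For any $\bm{x}$ with $\|\bm{x}\|\le M/2$ the non-expansiveness gives $\|A_n^T\bm{x}\|\le M/2$, whence $\|A_n^T\bm{x}-\bm{f}\|\ge\|\bm{f}\|-\|A_n^T\bm{x}\|>M/2$ for every centre $\bm{f}$; since $\psi$ is non-decreasing this forces $\min_{\bm{f}\in F_n}\psi(\|A_n^T\bm{x}-\bm{f}\|)\ge\psi(M/2)$. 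Integrating over $\{\|\bm{x}\|\le M/2\}$ yields
$$
m_k(P_n)=\Psi(F_n,A_n,P_n)\ge\psi(M/2)\,P_n\!\left(\|\bm{x}\|\le M/2\right),
$$
and $P_n(\|\bm{x}\|\le M/2)\to P(\|\bm{x}\|\le M/2)$ almost surely.

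Finally I would fix $M$. Since $\psi(M/2)\,P(\|\bm{x}\|\le M/2)$ increases to $\sup_r\psi(r)$ as $M\to\infty$, and this supremum strictly exceeds $\gamma$ (automatically when $\psi$ is unbounded, as in the $k$-means case $\psi(r)=r^2$, and otherwise whenever $\psi(\|\bm{x}\|)<\sup_r\psi$ on a set of positive $P$-measure), I can choose $M$ so large that $\psi(M/2)\,P(\|\bm{x}\|\le M/2)>\gamma+2$. On the intersection $\Omega_1$ of the two probability-one sets above, for all sufficiently large $n$ the lower bound exceeds $\gamma+1$ while the upper bound is at most $\gamma+1$, so no optimizer can have all its centres outside $B_q(M)$; because every member of $\Theta_n'$ attains the common value $m_k(P_n)$, this conclusion holds simultaneously for all $(F_n,A_n)\in\Theta_n'$. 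Thus each $\omega\in\Omega_1$ lies in $\bigcup_n\bigcap_{m=n}^{\infty}\{\forall(F_m,A_m)\in\Theta_m';\,F_m(\omega)\cap B_q(M)\neq\emptyset\}$, giving probability $1$. I expect the two estimates to be routine; the main obstacle is the bookkeeping that makes a single $M$ and a single null set work for every optimizer and every large $n$ at once, and here the observation that all elements of $\Theta_n'$ share the value $m_k(P_n)$ is precisely what lets the universal quantifier over optimizers be absorbed at no extra cost.
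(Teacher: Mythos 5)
Your proof is correct and follows essentially the same route as the paper's: both compare the upper bound $m_k(P_n)\le\int\psi(\|\bm{x}\|)P_n(d\bm{x})\to\gamma$ obtained from the single centre at the origin together with the key inequality $\|A^T\bm{x}\|\le\|\bm{x}\|$, against a lower bound of the form $\psi(\mathrm{large})\times P_n(\mathrm{ball})$ that is forced whenever an optimizer's centres all leave $B_q(M)$, the only difference being bookkeeping (the paper fixes a small ball $B_p(r)$ with $P(B_p(r))>0$ and requires $\psi(M-r)P(B_p(r))>\gamma$, whereas you let the ball grow as $B_p(M/2)$ and invoke $P_n(B_p(M/2))\to P(B_p(M/2))$). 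One small caveat: your specific target $\psi(M/2)P(\|\bm{x}\|\le M/2)>\gamma+2$ requires $\sup_r\psi(r)>\gamma+2$, not merely $\sup_r\psi(r)>\gamma$ as your parenthetical claims, so replace the constants $1$ and $2$ by $\delta$ and $2\delta$ with $0<2\delta<\sup_r\psi(r)-\gamma$; the underlying non-degeneracy assumption $\sup_r\psi(r)>\gamma$ is exactly what the paper itself uses implicitly when it asserts that an $M$ satisfying its displayed condition exists.
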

\begin{proof}
Choose an $r>0$ to satisfy the condition that $P(B_p(r))>0$.
Let us take $M$ to be sufficiently large to ensure that $M>r$ and 
\begin{align}\label{lemma3:eq1}
\psi(M-r)P(B_p(r))>\int \psi(\|\bm{x}\|)P(d\bm{x}).
\end{align}
Note that $m_k(P_n)\le \Psi(F,\;A,\;P_n)$ for all $F\in\mathcal{R}_k$ and all $A\in \mathcal{O}(p\times q)$.
Let $F_0$ be the singleton that consists of only the origin.
By the SLLN, we obtain
$$
\Psi(F_0,\;A,\;P_n) = \int \psi(\|A^{T}\bm{x}\|)P_n(d\bm{x}) \rightarrow \int \psi(\|A^T\bm{x}\|)P(d\bm{x}) \quad \mathrm{a.s.}
$$
for all $A\in \mathcal{O}(p\times q)$.
Since $\|A^T\bm{x}\|\le \|\bm{x}\|$, 
we have 
$$
\int \psi(\|A^T\bm{x}\|)P(d\bm{x}) \le \int\psi(\|\bm{x}\|)P(d\bm{x})
$$
for all $A\in \mathcal{O}(p\times q)$.

Let $\Omega^{\prime}:=\{\omega\in \Omega_1 \mid \forall n\in \mathbb{N};\;\exists m\ge n;\;F_m(\omega) \cap B_q(M)\}$.
For all $\omega\in \Omega^{\prime}$, there exists a subsequence $\{n_l\}_{l\in \mathbb{N}}$ such that $F_{n_l}(\omega) \cap B_q(M)=\emptyset$.
Since $\|A^{T}\bm{x}-\bm{f}\|\le \|\bm{f}\|-\|\bm{x}\|>M-r$ for all $\bm{x}\in B_p(r)$, all $\bm{f}\in B_{q}(M)$, and all $A\in \mathcal{O}(p\times q)$, 
we have
\begin{align*}
\lim\sup_l \Psi(F_{n_l},\;A_{n_l},\;P_{n_l}) 
&\ge 
\lim\sup_{l} \frac{1}{n_l}\sum_{i \in \{i\mid \bm{X}_i\in K\}}\min_{\bm{f}\in F_{n_l}}\psi(\|A_{n_l}^{T}\bm{X}_{i}-\bm{f}\|)\\
&\ge
\lim\sup_l \frac{1}{n_l}\sum_{i \in \{i\mid \bm{X}_i\in K\}}\psi(M-r)\\
&\ge
\psi(M-r)P(B_p(r)).
\end{align*}
From the assumptions made on the values of $M$, 
we have 
$$
\lim\sup_l \Psi(F_{n_l},\;A_{n_l},\;P_{n_l}) > \int \psi(\|\bm{x}\|)P(d\bm{x}),
$$
which contradicts $m_k(P_n)\le \Psi(F,\;A,\;P_n)$ for all $F\in\mathcal{R}_k$ and all $A\in \mathcal{O}(p\times q)$.
Therefore, we obtain $P(\Omega^{\prime})=0$; that is, 
$$
P\left( \bigcup_{n=1}^{\infty}\bigcap_{m=n}^{\infty} \{\omega \mid \forall (F_m,\;A_m)\in \Theta_m^{\prime} ;\;F_m(\omega)\cap B_q(M)\neq \emptyset\}\right)=1.
$$
\qed
\end{proof}

By Lemma $\ref{Lemma:3}$, 
without loss of generality, 
we can assume that 
each $F_n$ contains at least one element of $B_q(M)$ when $n$ is sufficiently large.
The next lemma indicates that 
there exists $M>0$ such that $B_q(5M)$ contains all the estimators of centers when $n$ is sufficiently large.


\begin{lemma}\label{Lemma:4}
Under the assumption of the theorem, 
there exists $M>0$ such that 
\begin{align*}
P\left(\bigcup_{n=1}^{\infty}\bigcap_{m=n}^{\infty}\{\omega \mid \forall (F_m,\;A_m)\in \Theta_m^{\prime};\;F_m(\omega)\subset B_q(5M)\}  \right)=1.
\end{align*}
\end{lemma}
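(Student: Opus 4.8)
The plan is to enlarge the radius $M$ so that it serves several purposes at once and then argue by contradiction. Concretely, I would fix $M>0$ large enough that (i) $M$ exceeds the radius produced in Lemma~\ref{Lemma:3}, so that for large $n$ at least one optimal centre lies in $B_q(M)$; (ii) $M$ is at least the constant of Proposition~\ref{Prop:1}, so that every population optimizer has all its centres in $B_q(5M)$; and (iii) the tail bound $\lambda\int_{\|\bm{x}\|>2M}\psi(\|\bm{x}\|)P(d\bm{x})<m_{k-1}(P)-m_k(P)$ holds. The last choice is legitimate because the left side tends to $0$ as $M\to\infty$ by integrability, while the right side is a fixed positive number under the hypothesis $m_{k-1}(P)>m_k(P)$. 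All statements are read on the full-measure set $\Omega_1$ on which the relevant SLLN, Lemma~\ref{Lemma:3}, and Proposition~\ref{Prop:2} simultaneously hold. As a first ingredient I would record an upper bound: choosing any $(F^{\ast},A^{\ast})\in\Theta^{\prime}$ (nonempty by Proposition~\ref{Prop:1}) with $F^{\ast}\subset B_q(5M)$, the ordinary SLLN applied to the integrable function $g_{(F^{\ast},A^{\ast})}$ gives $\Psi(F^{\ast},A^{\ast},P_n)\to\Psi(F^{\ast},A^{\ast},P)=m_k(P)$, so that $m_k(P_n)\le\Psi(F^{\ast},A^{\ast},P_n)$ yields $\limsup_n m_k(P_n)\le m_k(P)$ a.s.

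Now suppose, toward a contradiction, that on a set of positive probability there are a subsequence $\{n_l\}$ and optimizers $(F_{n_l},A_{n_l})\in\Theta_{n_l}^{\prime}$ with at least one centre outside $B_q(5M)$. Split $F_{n_l}=F_{n_l}^{\mathrm{near}}\cup F_{n_l}^{\mathrm{far}}$ according to membership in $B_q(5M)$; by Lemma~\ref{Lemma:3} there is a centre $\bm{g}\in F_{n_l}^{\mathrm{near}}\cap B_q(M)$, so $F_{n_l}^{\mathrm{near}}\neq\emptyset$ and $\#(F_{n_l}^{\mathrm{near}})\le k-1$. The crucial geometric step is to show that every data point captured by a far centre is a genuine outlier: if $\bm{X}_i$ is assigned to some $\bm{f}\in F_{n_l}^{\mathrm{far}}$, then comparing with $\bm{g}$ and using the key inequality $\|A^T\bm{x}\|\le\|\bm{x}\|$ gives $5M-\|\bm{X}_i\|<\|A_{n_l}^T\bm{X}_i-\bm{f}\|\le\|A_{n_l}^T\bm{X}_i-\bm{g}\|\le\|\bm{X}_i\|+M$, which forces $\|\bm{X}_i\|>2M$. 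Reassigning each such point to $\bm{g}$ raises its distance to at most $\|\bm{X}_i\|+M<2\|\bm{X}_i\|$, so by the growth condition the induced increase of the objective is at most $\lambda\int_{\|\bm{x}\|>2M}\psi(\|\bm{x}\|)P_{n_l}(d\bm{x})$. Consequently
\[
m_k(P_{n_l})=\Psi(F_{n_l},A_{n_l},P_{n_l})\ge\Psi(F_{n_l}^{\mathrm{near}},A_{n_l},P_{n_l})-\lambda\int_{\|\bm{x}\|>2M}\psi(\|\bm{x}\|)P_{n_l}(d\bm{x}).
\]

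Since $F_{n_l}^{\mathrm{near}}\subset B_q(5M)$ has at most $k-1$ centres, $\Psi(F_{n_l}^{\mathrm{near}},A_{n_l},P_{n_l})\ge m_{k-1}^{\ast}(P_{n_l}\mid 5M)$. This is where Proposition~\ref{Prop:2} enters, applied with $k-1$ clusters and radius $5M$: it gives $m_{k-1}^{\ast}(P_{n_l}\mid 5M)\to m_{k-1}^{\ast}(P\mid 5M)\ge m_{k-1}(P)$, while the SLLN handles the tail integral. Passing to the limit and combining with the upper bound $\limsup_n m_k(P_n)\le m_k(P)$ yields
\[
m_k(P)\ge m_{k-1}(P)-\lambda\int_{\|\bm{x}\|>2M}\psi(\|\bm{x}\|)P(d\bm{x}),
\]
that is, $\lambda\int_{\|\bm{x}\|>2M}\psi(\|\bm{x}\|)P(d\bm{x})\ge m_{k-1}(P)-m_k(P)$, contradicting condition (iii). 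Hence on $\Omega_1$ every optimal centre eventually lies in $B_q(5M)$, which is the assertion of the lemma.

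The step I expect to be the main obstacle is the lower bound for $m_k(P_{n_l})$: obtaining a usable $\liminf$ \emph{without} presupposing the consistency we are ultimately trying to establish. This is precisely what forces the appeal to Proposition~\ref{Prop:2} for the restricted $(k-1)$-cluster problem, and it is also where the separation hypothesis $m_{k-1}(P)>m_k(P)$ and the tail control furnished by $\psi(2r)\le\lambda\psi(r)$ must be balanced against each other through the choice of $M$. By comparison, the geometric ``captured points are outliers'' estimate and the passage from a positive-probability subsequence to an almost-sure contradiction are routine.
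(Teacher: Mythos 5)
Your proof is correct and follows essentially the same route as the paper's: a contradiction argument that combines Lemma \ref{Lemma:3} (one centre in $B_q(M)$), deletion of the centres outside $B_q(5M)$ to obtain a configuration in $\Theta_{k-1}^\ast(5M)$, Proposition \ref{Prop:2} applied to the restricted $(k-1)$-cluster problem, tail control via $\psi(2r)\le\lambda\psi(r)$, and the gap $m_{k-1}(P)>m_k(P)$. The only cosmetic differences are that you derive the reassignment bound pointwise (showing every far-assigned point satisfies $\|\bm{X}_i\|>2M$) where the paper splits the integral over $\{\|\bm{x}\|<2M\}$ and its complement, and that you fold the paper's $\epsilon$ directly into the choice of $M$ via the condition $\lambda\int_{\|\bm{x}\|>2M}\psi(\|\bm{x}\|)P(d\bm{x})<m_{k-1}(P)-m_k(P)$; the resulting estimates are identical.
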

\begin{proof}
Choose $\epsilon>0$ sufficiently small such that $\epsilon +m_k(P)<m_{k-1}(P)$.
Let us take $M>0$ to satisfy the inequality $(\ref{lemma3:eq1})$ and 
\begin{align}\label{lemma4:eq1}
\lambda \int_{\|\bm{x}\|\ge 2M}\psi(\|\bm{x}\|)P(d\bm{x})<\epsilon.
\end{align}

Suppose that $F_n$ contains at least one center outside $B_q(5M)$.
By Lemma $\ref{Lemma:3}$, 
when $n$ is sufficiently large, 
$F_n$ must contain at least one center in $B_q(M)$, say $\bm{f}_1\in B_q(M)$.
Since $\{\bm{x}\mid \|A^T\bm{x}\|\ge 2M\}\subset \{\bm{x}\mid \|\bm{x}\|\ge 2M\}$, we have
\begin{align*}
\int_{\|A^T\bm{x}\|\ge 2M} \psi(\|A^T\bm{x}-\bm{f}_1\|)P_n(d\bm{x}) 
&\le
\int_{\|\bm{x}\|\ge 2M} \psi(\|A^{T}\bm{x}-\bm{f}_1\|)P_n(d\bm{x})\\
&\le
\int_{\|\bm{x}\|\ge 2M} \psi(\|\bm{x}\|+\|\bm{f}_1\|)P_n(d\bm{x})\\
&\le
\lambda\int_{\|\bm{x}\|\ge 2M} \psi(\|\bm{x}\|)P_n(d\bm{x})
\end{align*}
for all $A\in \mathcal{O}(p\times q)$.
Let $F_n^{\ast}$ denote the set obtained by deleting all centers lying outside $B_q(5M)$ from $F_n$.
Since $(F_{n}^{\ast},\;A)\in \Theta_{k-1}^\ast(5M)$ for all $A\in \mathcal{O}(p\times q)$,
we have 
$$
\Psi(F_{n}^{\ast},\;A,\;P_n)\ge m_{k-1}^\ast(P_n\mid 5M) \ge m_{k-1}(P_n)
$$
for all $A\in \mathcal{O}(p\times q)$.
For each $\bm{x}\in B_p(2M)$ and each $A\in \mathcal{O}(p\times q)$, 
we have 
$$
\|A^T\bm{x}-\bm{f}\|\ge \|\bm{f}\|-\|\bm{x}\|>3M \quad\text{for all }\bm{f}\notin B_q(5M)
$$
and 
$$
\|A^T\bm{x}-\bm{g}\|\le \|\bm{x}\|+\|\bm{g}\|<3M \quad\text{for all }\bm{g}\in B_q(5M).
$$
Thus, we obtain
$$
\int_{\|x\|<2M}\min_{\bm{f}\in F_n}\psi(\|A^T\bm{x}-\bm{f}\|)P_n(d\bm{x})=\int_{\|x\|<2M}\min_{\bm{f}\in F_n^{\ast}}\psi(\|A^T\bm{x}-\bm{f}\|)P_n(d\bm{x})
$$
for all $A\in \mathcal{O}(p\times q)$.

Let $\Omega^{\ast}:=\{\omega\in \Omega_1 \mid \forall n \in \mathbb{N};\;\exists m\ge n;\;\exists (F_m,\;A_m)\in \Theta_m^{\prime};\;F_m(\omega)\not\subset B_q(5M)\}$.
By the axiom of choice, 
for an arbitrary $\omega\in \Omega^{\ast}$, there exists a subsequence $\{n_l\}_{l\in \mathbb{N}}$ such that
$F_m(\omega)\not\subset B_q(5M)$.
By Proposition $\ref{Prop:2}$, 
we have 
$$
\lim_{n\rightarrow \infty} m_{k-1}^\ast(P_n\mid 5M)=m_{k-1}^\ast(P\mid 5M)\quad \mathrm{a.s.}
$$
For any $(F,\;A)\in \Xi_{k}$,
we have 
\begin{align}\label{lemma4:eq2}
m_{k-1}(P)
&\le
m_{k-1}^\ast(P\mid 5M)
\le 
\lim\inf_l\Psi(F_{n_{l}}^{\ast},\;A_n,\;P_n)
\le
\lim\sup_l\Psi(F_{n_{l}}^{\ast},\;A_{n_l},\;P_{n_l})\nonumber\\
&\le
\lim\sup_n \Biggl[ \int_{\|\bm{x}\|<2M}\min_{\bm{f}\in F_{n}}\psi(\|A_n^{T}\bm{x}-\bm{f}\|)P_{n}(d\bm{x})\nonumber\\
&\qquad\qquad\quad+\int_{\|\bm{x}\|\ge2M}\psi(\|A_n^{T}\bm{x}-\bm{f}_1\|)P_{n}(d\bm{x}) \Biggr]\nonumber\\
&\le 
\lim\sup_n \left[ \Psi(F_n,\;A_n,\;P_n) +\lambda \int_{\|\bm{x}\|\ge 2M}\psi(\|\bm{x}\|)P_n(d\bm{x})\right]\nonumber\\
&\le 
\lim\sup_n\Psi(F,\;A,\;P_n) +\lambda \int_{\|\bm{x}\|\ge 2M}\psi(\|\bm{x}\|)P_n(d\bm{x}).
\end{align}
Choose $(\bar{F},\;\bar{A})\in \Theta^{\prime}$ as $(F,\;A)\in\Xi_{k}$ in the last bound of the above inequality.
By the assumption of $M>0$ and the SLLN, for a sufficiently large $n$,
the last bound of the inequality $(\ref{lemma4:eq2})$ can be less than
$m_{k}(P)+\epsilon$, 
which is a contradiction.
Therefore, 
we obtain
\begin{align*}
P\left(\bigcup_{n=1}^{\infty}\bigcap_{m=n}^{\infty}\{\omega \mid \forall (F_m,\;A_m)\in \Theta_m^{\prime};\;F_m(\omega)\subset B_q(5M)\}  \right)=1.
\end{align*}
\qed
\end{proof}

Hereafter, 
$M$ denotes a positive value satisfying inequalities $(\ref{lemma3:eq1})$ and $(\ref{lemma4:eq1})$.
According to Lemma $\ref{Lemma:4}$, 
for all $(F_n,\;A_n)\in \Theta_n^\prime$, $F_n\in \mathcal{R}_k^\ast(5M)$
when $n$ is sufficiently large.
Since $\mathcal{R}_k^\ast(5M)$ is compact, $\Theta_k^\ast(5M)$ is also compact.

By the uniform SLLN, the continuity of $\Psi(\cdot,\;\cdot,\;P)$ on $\Theta_k^\ast(5M)$ and Lemma $\ref{Lemma:4}$, 
the conclusion of the theorem for the cluster number $k$ can be proved in the same manner as 
was done for the last part of the proof of the consistency theorem in Terada (2012).

Choose $\theta_\ast\in \Theta_k^\ast(5M)$ such that $d(\theta_\ast,\;\Theta^{\prime})>0$.  
Write 
\begin{align*}
\tilde{\theta}_n
=
\begin{cases}
\hat{\theta}_n	& \text{if }\hat{\theta}_n\in \Theta_k^\ast(5M) \\
\theta_\ast     & \text{if }\hat{\theta}_n\notin \Theta_k^\ast(5M)
\end{cases}.
\end{align*}
By Lemma $\ref{Lemma:4}$, we have $\tilde{\theta}_n=\hat{\theta}_n$ for a sufficiently large $n$.
Since $\Psi(\hat{\theta}_n,\;P_n) = \inf_{\theta \in \Xi_k} \Psi(\theta,\;P_n)$, 
we have
$$
\lim\sup_{n}\left[ \Psi(\tilde{\theta}_n,\;P_n)-\inf_{\theta\in \Theta^{\prime}}\Psi(\theta,\;P_n)\right]\le 0\quad \mathrm{a.s.}
$$
Since $\lim\sup_n\psi(\theta_0,\;P_n)=m_k(P)$ for any $\theta_0\in \Theta^{\prime}$, 
$$
\lim\sup_n \inf_{\theta\in \Theta^{\prime}}\Psi(\theta,\;P_n) \le \lim\sup_n\Psi(\theta_0,\;P_n)=m_k(P)\quad \mathrm{a.s.}
$$
Hence, 
we have
\begin{align*}
0
&\ge 
\lim\sup_n \Psi(\tilde{\theta}_n,\;P_n)-\lim\sup_n \inf_{\theta\in \Theta^{\prime}} \Psi(\theta,\;P_n)\nonumber \\
&\ge
\lim\sup_n \Psi(\tilde{\theta}_n,\;P_n)-m_k(P)\quad \mathrm{a.s.}
\end{align*}
Let $\Theta_{\epsilon}^\ast(5M):=\{\theta\in \Theta_k^\ast(5M)\mid d(\theta,\;\Theta^{\prime})\ge\epsilon\}$.
By the uniform SLLN applied to $\Theta_k^\ast(5M)$,
we obtain
\begin{align*}
\lim\inf_n \inf_{\theta\in \Theta_{\epsilon}^\ast(5M)}\Psi(\theta,\;P_n)\ge 
\inf_{\theta\in \Theta_{\epsilon}^\ast(5M)}\Psi(\theta,\;P)\quad \mathrm{a.s.}
\end{align*}
for all $\epsilon>0$.
Fix an arbitrary $\epsilon>0$.
By Corollary $\ref{Col:1}$, 
$$
\lim\inf_n \inf_{\theta\in \Theta_{\epsilon}^\ast(5M)}\Psi(\theta,\;P_n)
>
\lim\sup_n \Psi(\tilde{\theta}_n,\;P_n)\quad \mathrm{a.s.}
$$
Thus,
for any $\omega\in \Omega_1$ there exists $n_0\in \mathbb{N}$ such that
$$
\inf_{\theta\in \Theta_{\epsilon}^\ast(5M)}\Psi(\theta,\;P_n)
>
\Psi(\tilde{\theta}_n,\;P_n)
$$
for all $n\ge n_0$.
Conversely, 
suppose that $d(\tilde{\theta}_n,\;\Theta^{\prime})\ge \epsilon$ for some $n\ge n_0$.
Then, we have
$$
\inf_{\theta\in \Theta_{\epsilon}^\ast(5M)}\Psi(\theta,\;P_n)=\Psi(\tilde{\theta}_n,\;P_n),
$$
which is a contradiction.
Thus,
we obtain 
$$
\lim_{n\rightarrow \infty}d(\tilde{\theta}_n,\;\Theta^{\prime})=0\quad \mathrm{a.s.}
$$
By $\tilde{\theta}_n=\hat{\theta}_n$ for a sufficiently large $n$, 
it follows that
$$
\lim_{n\rightarrow \infty}d(\hat{\theta}_n,\;\Theta^{\prime})=0\quad \mathrm{a.s.}
$$
Moreover, 
by the continuity of $\Psi(\cdot,\;P)$ on $\Theta_k^\ast(5M)$, we obtain
$$
\lim_{n\rightarrow \infty}m_k(P_n)=m_k(P)\quad \mathrm{a.s.}
$$

%
%
\section{Conclusion}\label{section:7}%
%
In this study, 
we proved the strong consistency of FKM clustering under i.i.d. sampling 
by using the frameworks of the proof for the consistency of $k$-means clustering (Pollard (1981)) 
and the consistency of RKM clustering (Terada (2012)).
The compactness of parameter space is not a requirement for the sufficient condition of the strong consistency for FKM clustering, as well as $k$-means clustering and RKM clustering.
As with the $k$-means and RKM clustering, 
the proof is based on Blum-DeHardt uniform SLLN (Peskir (2000)).
Thus, for the consistency of FKM clustering, 
stationarity and ergodicity is only required and the i.i.d. condition is also not necessary.
We also derived the sufficient condition for ensuring the existence of population global optimizers of FKM clustering. 
Moreover, we proved the uniform SLLN and continuity of the FKM objective function
in the proof of the consistency theorem.

In the future, 
we will derive the rate of convergence of FKM clustering estimators.
%




\appendix

%
\section{Existence of $\Theta^{\prime}$}\label{app}%
%
Here we prove the existence of population global optimizers.
\begin{lemma}\label{Lemma:A1}
Suppose that $\int\psi(\|\bm{x}\|)P(d\bm{x})<\infty$.
There exists $M>0$ such that
$$
\inf_{A\in \mathcal{O}(p\times q)}\Psi(F^{\prime},\;A,\;P) > \inf_{\theta\in \Theta_k^\ast(M)}\Psi(\theta,\;P)
$$
for all $F^{\prime}\in\mathcal{R}_k$ satisfying $F^{\prime}\cap B_q(M)=\emptyset$.
\end{lemma}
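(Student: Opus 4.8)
The plan is to reproduce, at the population level, the escape-to-infinity argument that appears later in Lemma~\ref{Lemma:3} for the empirical measure. The guiding idea is that a configuration $F'$ whose centres all lie outside $B_q(M)$ must pay a large price on the mass of $P$ sitting near the origin, while the restricted infimum on the right is already controlled by one cheap competitor. Concretely, I would first fix $r>0$ with $P(B_p(r))>0$ (possible since $P(B_p(r))\to 1$ as $r\to\infty$), and then choose $M>r$ large enough that
\[
\psi(M-r)\,P(B_p(r)) > \int \psi(\|\bm{x}\|)\,P(d\bm{x}),
\]
which is exactly the condition~(\ref{lemma3:eq1}) used in Lemma~\ref{Lemma:3}.

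For the upper bound on the right-hand side, I would test with the singleton $F_0=\{\bm{0}\}$, which lies in $\mathcal{R}_k^\ast(M)$ since $\bm{0}\in B_q(M)$. For every $A\in\mathcal{O}(p\times q)$, the key contraction inequality $\|A^T\bm{x}\|\le\|\bm{x}\|$ together with the monotonicity of $\psi$ gives
\[
\inf_{\theta\in\Theta_k^\ast(M)}\Psi(\theta,\;P) \le \Psi(F_0,\;A,\;P) = \int \psi(\|A^T\bm{x}\|)P(d\bm{x}) \le \int \psi(\|\bm{x}\|)P(d\bm{x}).
\]
For the lower bound on the left-hand side, fix any $F'\in\mathcal{R}_k$ with $F'\cap B_q(M)=\emptyset$, so $\|\bm{f}\|>M$ for all $\bm{f}\in F'$. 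For $\bm{x}\in B_p(r)$ and any $A$, the reverse triangle inequality yields $\|A^T\bm{x}-\bm{f}\|\ge\|\bm{f}\|-\|A^T\bm{x}\|\ge M-r$, whence restricting the integral to $B_p(r)$ and using that $\psi$ is nonnegative and monotone gives
\[
\Psi(F',\;A,\;P) \ge \int_{B_p(r)} \min_{\bm{f}\in F'}\psi(\|A^T\bm{x}-\bm{f}\|)P(d\bm{x}) \ge \psi(M-r)\,P(B_p(r)).
\]
Since this lower bound is uniform in $A$, chaining the two displays through the defining inequality for $M$ gives $\inf_{A}\Psi(F',\;A,\;P)\ge\psi(M-r)P(B_p(r))>\int\psi(\|\bm{x}\|)P(d\bm{x})\ge\inf_{\theta\in\Theta_k^\ast(M)}\Psi(\theta,\;P)$, which is the assertion.

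The only delicate point — what I would treat as the main obstacle — is guaranteeing that a valid $M$ exists, i.e. that $\psi(M-r)\to\infty$ fast enough to overtake the finite integral $\int\psi(\|\bm{x}\|)P(d\bm{x})$ as $M$ grows. This is precisely the mechanism that lets the loss \emph{detect} centres running off to infinity, and it is the same requirement already exploited in Lemma~\ref{Lemma:3}; everything else reduces to the reverse triangle inequality and the contraction fact $\|A^T\bm{x}\|\le\|\bm{x}\|$ emphasized in the introduction.
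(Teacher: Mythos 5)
Your proposal is correct and is essentially the paper's own argument: the same choice of $r$ with $P(B_p(r))>0$ and of $M$ satisfying $\psi(M-r)P(B_p(r))>\int\psi(\|\bm{x}\|)P(d\bm{x})$, the same upper bound on $\inf_{\theta\in\Theta_k^\ast(M)}\Psi(\theta,P)$ via the origin singleton and the contraction $\|A^T\bm{x}\|\le\|\bm{x}\|$, and the same reverse-triangle-inequality lower bound $\psi(M-r)P(B_p(r))$ for any $F'$ disjoint from $B_q(M)$. The only cosmetic difference is that you argue directly while the paper wraps the identical chain of inequalities in a proof by contradiction (and, like you, the paper implicitly assumes $\psi$ grows enough for such an $M$ to exist, exactly as in its Lemma~\ref{Lemma:3}).
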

\begin{proof}
Conversely, suppose that, for all $M>0$, 
there exists $F^{\prime}\in\mathcal{R}_k$ such that $F^{\prime}\cap B_q(M)=\emptyset$ and 
\begin{align*}
\inf_{A\in \mathcal{O}(p\times q)}\Psi(F^{\prime},\;A,\;P) \le \inf_{\theta\in \Theta_k^\ast(M)}\Psi(\theta,\;P).
\end{align*}
Choose $r>0$ to satisfy that 
the ball $B_p(r)$ has a positive $P$ measure; 
that is $P(B_p(r))>0$.
Let $M$ be sufficiently large such that   $M>r$ and 
that it satisfies inequality $(\ref{lemma3:eq1})$.
Since $\|A^T\bm{x}-\bm{f}\|\ge \|\bm{f}\| -\|A^{T}\bm{x}\|>M-r$ for all $\bm{f}\notin B_{q}(M)$ and all $\bm{x}\in B_p(r)$, 
we have
\begin{align*}
\int \psi(\|\bm{x}\|)P(\bm{x}) 
&\ge \inf_{\theta\in \Theta_k^\ast(M)}\Psi(\theta,\;P)
\ge \inf_{A\in \mathcal{O}(p\times q)}\Psi(F^{\prime},\;A,\;P)\\
&\ge \inf_{A\in \mathcal{O}(p\times q)}\int_{\bm{x}\in B_p(r)}\min_{\bm{f}\in F^{\prime}}\psi(\|A^T\bm{x}-\bm{f}\|)P(d\bm{x})\\
&\ge \phi(M-r)P(B_p(r)).
\end{align*}
This is a contradiction.
\qed
\end{proof}

\begin{lemma}\label{Lemma:A2}
Suppose that $\int\psi(\|\bm{x}\|)P(d\bm{x})<\infty$, and for $j=2,\;3,\;\dots,\;k-1$, $m_j(P) > m_{k}(P)$.
There exists $M>0$ such that, for all $F^{\prime}\in\mathcal{R}_k$ satisfying $F^{\prime}\not\subset B_q(5M)$,
$$
\inf_{A\in \mathcal{O}(p\times q)}\Psi(F^{\prime},\;A,\;P) > 
\inf_{\theta\in \Theta_k^\ast(5M)}\Psi(\theta,\;P).
$$
\end{lemma}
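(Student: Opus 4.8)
The plan is to transcribe the contradiction-free deletion argument of Lemma~\ref{Lemma:4} into the population setting, with one extra refinement: since the right-hand side here is the \emph{restricted} minimum $m_k^\ast(P\mid 5M)$ rather than $m_k(P)$, I must arrange that the lower bound produced by deletion strictly exceeds $m_k^\ast(P\mid 5M)$. As preliminaries I would record two facts. First, $m_{k-1}(P)\le\int\psi(\|\bm{x}\|)P(d\bm{x})<\infty$, so together with the hypothesis $m_{k-1}(P)>m_k(P)$ I may fix $\epsilon>0$ with $m_k(P)+\epsilon<m_{k-1}(P)$. Second, the restricted minima decrease to the unrestricted one, $m_k^\ast(P\mid 5M)\downarrow m_k(P)$ as $M\to\infty$: any near-optimal $(F,A)\in\Xi_k$ has $F$ finite, hence bounded, so it lies in $\Theta_k^\ast(5M)$ once $5M$ exceeds its radius, which with $m_k^\ast(P\mid 5M)\ge m_k(P)$ gives the limit.

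With these in hand I would choose $M$ large enough to satisfy simultaneously the requirement of Lemma~\ref{Lemma:A1}, the tail bound $\lambda\int_{\|\bm{x}\|\ge 2M}\psi(\|\bm{x}\|)P(d\bm{x})<\epsilon$ (inequality $(\ref{lemma4:eq1})$, available by integrability), and $m_k^\ast(P\mid 5M)<m_{k-1}(P)-\epsilon$ (from the monotone limit). Fixing an arbitrary $F^{\prime}\in\mathcal{R}_k$ with $F^{\prime}\not\subset B_q(5M)$, I split into two cases. If $F^{\prime}\cap B_q(M)=\emptyset$, Lemma~\ref{Lemma:A1} gives directly $\inf_{A}\Psi(F^{\prime},A,P)>m_k^\ast(P\mid M)\ge m_k^\ast(P\mid 5M)$, the last inequality holding because $\Theta_k^\ast(M)\subset\Theta_k^\ast(5M)$. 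Otherwise there is a centre $\bm{f}_1\in F^{\prime}\cap B_q(M)$, and since $F^{\prime}\not\subset B_q(5M)$ at least one centre lies outside $B_q(5M)$. Setting $F^{\prime\ast}:=F^{\prime}\cap B_q(5M)$, I have $\bm{f}_1\in F^{\prime\ast}$, $\#(F^{\prime\ast})\le k-1$, and $(F^{\prime\ast},A)\in\Theta_{k-1}^\ast(5M)$ for every $A$, so $\Psi(F^{\prime\ast},A,P)\ge m_{k-1}^\ast(P\mid 5M)\ge m_{k-1}(P)$.

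The core of the argument is the split of $\Psi(F^{\prime\ast},A,P)$ at $\|\bm{x}\|=2M$. Using the key inequality $\|A^T\bm{x}\|\le\|\bm{x}\|$, for $\|\bm{x}\|<2M$ one gets $\|A^T\bm{x}-\bm{f}_1\|<3M$ while $\|A^T\bm{x}-\bm{f}\|>3M$ for every deleted centre $\bm{f}$, so by monotonicity of $\psi$ the inner minimum over $F^{\prime}$ agrees with that over $F^{\prime\ast}$ on this region; for $\|\bm{x}\|\ge 2M$ one bounds $\min_{\bm{f}\in F^{\prime\ast}}\psi(\|A^T\bm{x}-\bm{f}\|)\le\psi(\|A^T\bm{x}-\bm{f}_1\|)\le\psi(2\|\bm{x}\|)\le\lambda\psi(\|\bm{x}\|)$. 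Combining these gives $\Psi(F^{\prime\ast},A,P)\le\Psi(F^{\prime},A,P)+\lambda\int_{\|\bm{x}\|\ge 2M}\psi(\|\bm{x}\|)P(d\bm{x})<\Psi(F^{\prime},A,P)+\epsilon$, so that $\Psi(F^{\prime},A,P)\ge m_{k-1}(P)-\epsilon$ for every $A$. Passing to the infimum over $A$ and invoking the choice $m_k^\ast(P\mid 5M)<m_{k-1}(P)-\epsilon$ yields $\inf_{A}\Psi(F^{\prime},A,P)\ge m_{k-1}(P)-\epsilon>m_k^\ast(P\mid 5M)$, which is the assertion.

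I expect the main obstacle to be the bookkeeping that forces the final strict inequality to land against $m_k^\ast(P\mid 5M)$ rather than $m_k(P)$: the deletion step only delivers the lower bound $m_{k-1}(P)-\epsilon$, so one must separately guarantee that $M$ has been taken large enough to push $m_k^\ast(P\mid 5M)$ strictly below that value, which is exactly where the monotone convergence $m_k^\ast(P\mid 5M)\downarrow m_k(P)$ is needed. A secondary point demanding care is that every estimate in the split be uniform in $A$, so that passing to $\inf_{A}$ preserves the strict gap; this holds because $F^{\prime\ast}$, the tail bound, and the separation constant $m_{k-1}(P)$ are all independent of $A$.
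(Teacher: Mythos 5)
Your proof is correct. The analytic core --- deleting the centres outside $B_q(5M)$, splitting the integral at $\|\bm{x}\|=2M$, using the $3M$ separation so that the minima over $F^{\prime}$ and over $F^{\prime\ast}$ agree on $\{\|\bm{x}\|<2M\}$, and absorbing the tail through $\psi(2r)\le\lambda\psi(r)$ and inequality $(\ref{lemma4:eq1})$ to land at the lower bound $m_{k-1}(P)-\epsilon$ --- is exactly the paper's. Where you genuinely differ is the logical packaging. The paper argues by contradiction: if the lemma fails, the collection $\mathcal{R}_k^{\prime}$ of offending $F^{\prime}$ satisfies $m_k(P)=\inf_{\theta\in\mathcal{R}_k^{\prime}\times\mathcal{O}(p\times q)}\Psi(\theta,\;P)$ (each offending $F^{\prime}$ undercuts $m_k^\ast(P\mid 5M)$, hence undercuts every other competitor), so the deletion bound immediately yields $m_k(P)+\epsilon\ge m_{k-1}(P)$, contradicting the choice of $\epsilon$; no statement about how $m_k^\ast(P\mid\cdot)$ varies with $M$ is ever needed. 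You argue directly, which forces the lower bound $m_{k-1}(P)-\epsilon$ to be compared against the \emph{restricted} minimum $m_k^\ast(P\mid 5M)$, and you bridge that gap with an extra fact the paper does not use: $m_k^\ast(P\mid 5M)\downarrow m_k(P)$ as $M\to\infty$, which you justify correctly (a near-optimal $F$ is finite, hence bounded, hence eventually inside $B_q(5M)$). Your route buys a cleaner quantifier structure (the paper's ``choose $M$ \dots\ suppose for all $M$'' is awkward) and an explicit two-case split that isolates exactly where Lemma \ref{Lemma:A1} enters; the paper's route is more economical, needing no auxiliary convergence claim. One point you should make explicit: Lemma \ref{Lemma:A1} asserts only the existence of \emph{some} radius $M_1$, so when you impose ``the requirement of Lemma \ref{Lemma:A1}'' on your larger $M$, add the one-line remark that its conclusion is inherited upward --- if $F^{\prime}\cap B_q(M)=\emptyset$ with $M\ge M_1$, then $F^{\prime}\cap B_q(M_1)=\emptyset$, whence $\inf_{A}\Psi(F^{\prime},\;A,\;P)>m_k^\ast(P\mid M_1)\ge m_k^\ast(P\mid M)$ --- or, equivalently, that the proof of Lemma \ref{Lemma:A1} works for every $M$ satisfying $M>r$ and inequality $(\ref{lemma3:eq1})$.
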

\begin{proof}
Choose $M>0$ to be sufficiently large to satisfy inequalities $(\ref{lemma3:eq1})$ and $(\ref{lemma4:eq1})$.
Suppose that, for all $M>0$, there exists $F^{\prime}\in\mathcal{R}_k$ satisfying $F^{\prime}\not\subset B_q(5M)$ and
$$
\inf_{A\in \mathcal{O}(p\times q)}\Psi(F^{\prime},\;A,\;P) \le 
\inf_{\theta\in \Theta_k^\ast(5M)}\Psi(\theta,\;P).
$$
Let $\mathcal{R}_k^{\prime}$ be the set of such $F^{\prime}$ and then
$$
m_{k}(P)=\inf_{\theta\in \mathcal{R}_k^{\prime}\times \mathcal{O}(p\times q)}\Psi(\theta,\;P).
$$
According to Lemma \ref{Lemma:A1}, 
each $F^{\prime}\in \mathcal{R}_k^{\prime}$ includes at least one point on $B_{q}(M)$, say $\bm{f}_1$.
For all $\bm{x}$ satisfying $\|\bm{x}\|<2M$ and all $A\in\mathcal{O}(p\times q)$, 
we obtain
$$
\|A^T\bm{x}-\bm{f}\|>3M \quad \text{for all $\bm{f}\not\in B_q(5M)$}
$$
and 
$$
\|A^T\bm{x}-\bm{g}\|<3M \quad \text{for all $\bm{g}\in B_q(M)$}.
$$
Thus, 
$$
\int_{\|\bm{x}\| < 2M} \min_{\bm{f}\in F^{\prime}}\psi(\|A^T\bm{x}-\bm{f}\|)P(d\bm{x})
=
\int_{\|\bm{x}\| < 2M} \min_{\bm{f}\in F^{\ast}}\psi(\|A^T\bm{x}-\bm{f}\|)P(d\bm{x}),
$$
where the set $F^{\ast}$ is obtained by deleting all points outside $B_q(5M)$ from $F^{\prime}$.
Since
$
\int_{\|\bm{x}\| \ge 2M}\psi(\|A^T\bm{x}-\bm{f}_1\|)P(d\bm{x}) 
\le
\lambda \int_{\|\bm{x}\| \ge 2M}\psi(\|\bm{x}\|)P(d\bm{x})
$, we obtain that
\begin{align*}
&\Psi(F_{k}^{\prime},\;A,\;P) + \lambda \int_{\|\bm{x}\| \ge 2M}\psi(\|\bm{x}\|)P(d\bm{x})\\
&\ge
\int_{\|\bm{x}\| < 2M} \min_{\bm{f}\in F^{\ast}}\psi(\|A^T\bm{x}-\bm{f}\|)P(d\bm{x})
+
\int_{\|\bm{x}\| \ge 2M}\psi(\|A^T\bm{x}-\bm{f}_1\|)P(d\bm{x})\\
&\ge
\Psi(F^{\ast},\;A,\;P) \ge m_{k-1}(P)
\end{align*}
for all $A\in \mathcal{O}(p\times q)$.
It follows that
$m_k(P)+\epsilon \le m_{k-1}(P)$, 
which is a contradiction.
\qed
\end{proof}
Let us consider $M>0$ to be sufficiently large to satisfy inequalities $(\ref{lemma3:eq1})$ and $(\ref{lemma4:eq1})$.
Write $\Theta_k:=\mathcal{R}_k^\ast(5M)\times \mathcal{O}(p\times q)$.
Proposition $\ref{Prop:1}$ and Corollary $\ref{Col:1}$ can be proved in the same way as Proposition $1$ and Corollary $1$ in Terada (2012).
\begin{proof}[Proof of Proposition $\ref{Prop:1}$]%
According to Lemma \ref{Lemma:A2},
$$
\inf_{\theta\in \Xi_k}\Psi(\theta,\;P)=\inf_{\theta\in \Theta_k}\Psi(\theta,\;P).
$$
Moreover, 
for any $\theta\in (\mathcal{R}_k\setminus \mathcal{R}_k^\ast(5M))\times \mathcal{O}(p\times q)$,
$m_k(P)<\Psi(\theta,\;P)$.
Thus, we only have to prove $\Theta^{\prime}\neq \emptyset$.

Let $C:=\{\Psi(\theta,\;P)\mid \theta \in \Theta_k\}$ and then $m_k(P)=\inf C$.
By the definition of the infimum, for all $x > m_k(P)$, there exists $c\in C$ such that $c<x$.
By the axiom of choice, 
we can obtain a sequence $\{c_n\}_{n\in \mathbb{N}}$ such that 
$c_n \rightarrow m_k(P)$ as $n\rightarrow \infty$.
Using the axiom of choice again, 
we can obtain a sequence $\{\theta_{n}\}_{n\in \mathbb{N}}$ such that $\Psi(\theta_n,\;P)\rightarrow m_k(P)$ as $n\rightarrow \infty$.

By the compactness of $\Theta_k$, there exists a convergent subsequence of $\{\theta_n\}_{n\in \mathbb{N}}$, 
say $\{\theta_{n_i}\}_{i\in \mathbb{N}}$.
Let $\theta_{\ast}\in \Theta_k$ denote the limit of subsequence $\{\theta_{n_i}\}_{i\in \mathbb{N}}$, 
i.e., $\theta_{m_i}\rightarrow \theta_\ast$ as $i\rightarrow \infty$.
Since $\Psi(\cdot,\;P)$ is continuous on $\Theta_{k}$, $\Psi(\theta_\ast,\;P)=m_k(P)$.
Hence, we obtain $\Theta^{\prime}\neq \emptyset$.
\qed
\end{proof}

\begin{proof}[Proof of Corollary $\ref{Col:1}$]%
Let $\Theta_\epsilon:=\{\theta_k\in \Theta_k \mid \Psi(\theta_k,\;P)=m_k(P)\}$.
Conversely, 
suppose that there exists $\epsilon >0$ such that $\inf_{\theta\in \Theta_{\epsilon}}\Psi(\theta,\;P)=\inf_{\theta\in \Theta^{\prime}}\Psi(\theta,\;P)$.
By the definition of the infimum, 
there exists a sequence $\{\theta_n\}_{n\in \mathbb{N}}$ on $\Theta_{\epsilon}$ such that
$\Psi(\theta_n,\;P)\rightarrow m_k(P)$ as $n\rightarrow \infty$.
By compactness of $\Theta_k$, there exists a convergent subsequence of $\{\theta_n\}_{n\in \mathbb{N}}$, 
say $\{\theta_{m_i}\}_{i \in \mathbb{N}}$.
Let $\theta_{\ast}\in \Theta_k$ denote the limit of subsequence $\{\theta_{m_i}\}_{i\in \mathbb{N}}$.
Since $\theta_{m_i}\rightarrow \theta_\ast$ as $i\rightarrow \infty$, 
we have $d(\theta_{m_i},\;\theta_\ast)<\epsilon$ for a sufficiently large $i$, 
which is a contradiction.
\qed
\end{proof}

\end{document}